%% file: paper.tex
\documentclass
[
    fontsize = 11 pt,
    american,
    captions = tableheading,
    numbers = noenddot,
    abstracton,
    paper = letter,
    DIV=13,
    left=1in,
    right=1in,
    top=1in,
    bottom=1in,
]
{scrartcl}

\input{core/standard_packages_2.tex}

\input{packages_and_commands/additional_packages.tex}
\input{core/general_commands_2.tex}
\input{packages_and_commands/additional_commands.tex}

\usepackage[utf8]{inputenc}

\title{Reemergence of the Epidemic Threshold in SIRS Infections on Connected Stars}
\author{%
    Andreas Göbel$^{*}$ \and Nicolas Klodt$^{*}$ \and Martin~S. Krejca$^{\dagger}$
}

\publishers
 {
  	\footnotesize $^{*}$ Hasso Plattner Institute, University of Potsdam, Potsdam, Germany\\
  	\{andreas.goebel, nicolas.klodt\}@hpi.de \\[1 ex]

  	$^{\dagger}$ LIX, CNRS, Ecole Polytechnique, Institut Polytechnique de Paris, Palaiseau, France \\
  	martin.krejca@polytechnique.edu
 }

\usepackage{graphicx}
\usepackage{mathtools}
\usepackage{amsmath}
\usepackage{amssymb}
\usepackage{enumitem}
\usepackage{amsthm}
\usepackage{hyperref}
\usepackage{svg}

\begin{document}

\maketitle

\begin{abstract}
    The SIRS process is a continuous-time process for how infections spread on a graph.
    In this model, each vertex is in one of the following three states: susceptible (to the infection; S), infected~(I), or recovered~(R) and thus immune to the infection.
    For each vertex, the transition among these states is exponentially distributed according to the parameters of the process.
    It was recently shown that recovered vertices effectively stop the infection on stars, that is, the expected survival time of SIRS processes on stars is bounded from above by a polynomial in the number of the vertices, independently of the infection rate of the process. The setting where the process has, so far, been shown to exhibit epidemic behavior, i.e., super-polynomial survival time when the infection rate is above some threshold value, requires the host graph to be an expander. This is in contrast to the shown behavior of the well-studied SIS process, a related model in which vertices never transition to~R, and in which even sparsely connected graphs, in particular stars, exhibit epidemic behavior.

    In this work, we show that expansion of the host graph is not a necessary condition for the SIRS process to result in an epidemic. Our main technical contribution shows that, while the SIRS process does not survive super-polynomially long on a single star, it does so on a network of poly-logarithmic (in the total number of vertices) stars of polynomial size. In addition, we show that such substructures appear in popular complex network models, providing for each a bound on the epidemic threshold. In particular, on hyperbolic random graphs, we compare our threshold for connected stars with the previously known one based on expansion, finding that both of them can be more permissive depending on the graph's power-law exponent and the rate that determines how long immunity lasts.
\end{abstract}

\section{Introduction}
\input{content/introduction}

\section{The SIRS Process and Complex Graph Network Models}
\label{sec:preliminaries}
\input{content/prelims}

\section{Lower Bound of the SIRS Survival Time on Connected Stars}
\label{sec:theory}
\input{content/SIRS_connected_stars}

\section{Super-Polynomial Survival Time of the SIRS Process in Scale-Free Graph Models}\label{sec:real_world_graphs}
\input{content/application}

\section{Outlook}
\label{sec:outlook}
\input{content/outlook}

\section*{Acknowledgments}
\input{content/acknowledgments}

\printbibliography

\newpage

\appendix

\input{content/appendix}

\end{document}

%% file: core/standard_packages_2.tex
\usepackage[T1]{fontenc}
\usepackage[utf8]{inputenc}
\usepackage
[
    babel = true, 
]
{microtype}           
\usepackage{csquotes} 

\usepackage[dvipsnames]{xcolor} 

\definecolor{stroke1}{HTML}{2574A9} 

\usepackage{amsmath}
\usepackage{amssymb}
\usepackage{amsthm}
\usepackage{thmtools}
\usepackage{mathtools}
\usepackage{thm-restate}
\usepackage{dsfont}        
\usepackage{braceMnSymbol} 

\usepackage
[
    ttscale = 0.85, 
]
{libertine} 
\usepackage
[
    libertine,    
    slantedGreek, 
    vvarbb,       
    libaltvw,     
]
{newtxmath} 
\usepackage{url} 
\usepackage{bm}  

\usepackage{graphicx} 
\usepackage
[
    subrefformat = simple, 
    labelformat = simple,  
]
{subcaption}         

\usepackage{array}     
\usepackage{booktabs}  
\usepackage{longtable} 
\usepackage{pdflscape} 

\usepackage[shortlabels]{enumitem} 

\usepackage
[
    ruled,         
    vlined,        
    linesnumbered, 
]
{algorithm2e} 

\usepackage{xspace}   
\usepackage
[
    shortcuts, 
]
{extdash}             
\usepackage{setspace} 

\usepackage{xparse}    
\usepackage{footnote}  
\usepackage{afterpage} 
\usepackage
[
    textsize = scriptsize, 
]
{todonotes}            

\usepackage
[
    sortcites,              
    style = alphabetic,     
    defernumbers,           
    safeinputenc,           
    backref = true,         
    backrefstyle = three,   
    hyperref = true,        
    maxbibnames = 99,       
    mincitenames = 1,       
    maxcitenames = 3,       
    minalphanames = 3,       
]
{biblatex} 

\DefineBibliographyStrings{english}%
{%
    backrefpage  = {\lowercase{s}ee page}, 
    backrefpages = {\lowercase{s}ee pages} 
}

\input{packages_and_commands/additional_packages}

\usepackage
[
    bookmarks = true,                 
    bookmarksopen = false,            
    bookmarksnumbered = true,         
    pdfstartpage = 1,                 
    colorlinks = true,                
    allcolors = stroke1,              
]
{hyperref} 

\usepackage
[
    noabbrev,   
    nameinlink, 
]
{cleveref} 

%% file: packages_and_commands/additional_packages.tex
\usepackage{multirow}
\usepackage{caption}

%% file: core/general_commands_2.tex
\date{}

\makeatletter
    \def\IfEmptyTF#1%
    {%
        \if\relax\detokenize{#1}\relax%
            \expandafter\@firstoftwo%
        \else%
            \expandafter\@secondoftwo%
        \fi%
    }
\makeatother

\NewDocumentCommand{\mathOrText}{m}
{%
    \ensuremath{#1}\xspace%
}

\let\originalleft\left
\let\originalright\right
\renewcommand{\left}{\mathopen{}\mathclose\bgroup\originalleft}
\renewcommand{\right}{\aftergroup\egroup\originalright}

\makeatletter
    \DeclareRobustCommand{\bfseries}%
    {%
        \not@math@alphabet\bfseries\mathbf%
        \fontseries\bfdefault\selectfont%
        \boldmath%
    }

\xspaceaddexceptions{]\}}

\allowdisplaybreaks

\crefname{ineq}{inequality}{inequalities}
\creflabelformat{ineq}{#2{\upshape(#1)}#3}

\crefname{term}{term}{terms}
\creflabelformat{term}{#2{\upshape(#1)}#3}

\crefname{cond}{condition}{conditions}
\creflabelformat{cond}{#2{\upshape(#1)}#3}

\crefname{assume}{assumption}{assumptions}
\creflabelformat{assume}{#2{\upshape(#1)}#3}

\let\oldfootnote\footnote

\newlength{\spaceBeforeFootnote} 
\newlength{\spaceAfterFootnote}  

\RenewDocumentCommand{\footnote}{o o o m}%
{%
    \IfNoValueTF{#1}%
    {%
        \oldfootnote{#4}%
    }%
    {%
        \setlength{\spaceBeforeFootnote}{\IfEmptyTF{#1}{0}{#1} em}%
        \IfNoValueTF{#2}%
        {%
            \hspace*{\spaceBeforeFootnote}\oldfootnote{#4}%
        }%
        {%
            \setlength{\spaceAfterFootnote}{\IfEmptyTF{#2}{0}{#2} em}%
            \hspace*{\spaceBeforeFootnote}\IfNoValueTF{#3}{\oldfootnote{#4}}{\oldfootnote[#3]{#4}}\hspace*{\spaceAfterFootnote}%
        }%
    }%
}

\makesavenoteenv{figure}
\makesavenoteenv{table}
\makesavenoteenv{tabular}

\declaretheoremstyle
[
   	spaceabove = \topsep,
   	spacebelow = \topsep,
   	headfont = \bfseries,
   	headformat = \textcolor{stroke1}{$\blacktriangleright$} \NAME~\NUMBER \NOTE,
   	notefont = \bfseries,
   	notebraces = {(}{)},
   	bodyfont = \normalfont,
   	postheadspace = 0.5 em,
   	qed = \textcolor{stroke1}{\bfseries$\blacktriangleleft$},
]
{myTheoremStyle}

\declaretheorem
[
   	style = myTheoremStyle,
   	name = Lemma,
    sharenumber = conjecture,
]
{lemma}
\declaretheorem
[
   	style = myTheoremStyle,
   	name = Corollary,
    sharenumber = conjecture,
]
{corollary}
\declaretheorem
[
   	style = myTheoremStyle,
   	name = Theorem,
    sharenumber = conjecture,
]
{theorem}
\declaretheorem
[
   	style = myTheoremStyle,
   	name = Definition,
    sharenumber = conjecture,
]
{definition}

\NewDocumentCommand{\functionTemplate}{m m m m o}%
{%
    \IfNoValueTF{#5}%
    {%
        \mathOrText{#1\left#2{#4}\right#3}%
    }%
    {%
        \mathOrText{#1#5#2{#4}#5#3}%
    }%
}

\newcommand*{\leftBracketType}{(}
\newcommand*{\rightBracketType}{)}

\NewDocumentCommand{\createFunction}{m m o o}%
{%
    \renewcommand*{\leftBracketType}{\IfNoValueTF{#3}{(}{#3}}%
    \renewcommand*{\rightBracketType}{\IfNoValueTF{#4}{)}{#4}}%
    \NewDocumentCommand{#1}{o o}%
    {%
        \IfNoValueTF{##1}%
        {%
            \mathOrText{#2}%
        }%
        {%
            \functionTemplate{#2}{\leftBracketType}{\rightBracketType}{##1}[##2]%
        }%
    }%
}

\DeclareDocumentCommand{\probabilisticFunctionTemplate}{m m O{} o}
{%
    \functionTemplate{#1}%
    {\lbrack}%
    {\rbrack}%
    {#2\IfEmptyTF{#3}{}{\ \IfNoValueTF{#4}{\left}{#4}\vert\ \vphantom{#2}#3\IfNoValueTF{#4}{\right.}{}}}%
    [#4]%
}

\newcommand*{\N}{\mathOrText{\mathds{N}}}

\newcommand*{\R}{\mathOrText{\mathds{R}}}

\newcommand*{\indicatorFunctionSymbol}{\mathds{1}}

\RenewDocumentCommand{\Pr}{m O{} o}%
{%
    \probabilisticFunctionTemplate{\mathrm{Pr}}{#1}[#2][#3]%
}

\NewDocumentCommand{\E}{m O{} o}%
{%
    \probabilisticFunctionTemplate{\mathrm{E}}{#1}[#2][#3]%
}

\NewDocumentCommand{\Var}{m O{} o}%
{%
    \probabilisticFunctionTemplate{\mathrm{Var}}{#1}[#2][#3]%
}

\DeclareDocumentCommand{\bigO}{m o}%
{%
    \functionTemplate{\mathrm{O}}{(}{)}{#1}[#2]%
}

\DeclareDocumentCommand{\smallO}{m o}%
{%
    \functionTemplate{\mathrm{o}}{(}{)}{#1}[#2]%
}

\DeclareDocumentCommand{\bigTheta}{m o}%
{%
    \functionTemplate{\upTheta}{(}{)}{#1}[#2]%
}

\DeclareDocumentCommand{\bigOmega}{m o}%
{%
    \functionTemplate{\upOmega}{(}{)}{#1}[#2]%
}

\DeclareDocumentCommand{\smallOmega}{m o}%
{%
    \functionTemplate{\upomega}{(}{)}{#1}[#2]%
}

\DeclareDocumentCommand{\eulerE}{o}%
{%
    \mathOrText{\mathrm{e}\IfNoValueTF{#1}{}{^{#1}}}%
}

\DeclareDocumentCommand{\poly}{m o}%
{%
    \functionTemplate{\mathrm{poly}}{(}{)}{#1}[#2]%
}

\createFunction{\id}{\mathrm{id}}

\NewDocumentCommand{\ind}{m o o}%
{%
    \IfNoValueTF{#2}%
    {%
        \mathOrText{\indicatorFunctionSymbol_{#1}}%
    }%
    {%
        \functionTemplate{\indicatorFunctionSymbol_{#1}}{(}{)}{#2}[#3]%
    }%
}

\DeclareDocumentCommand{\dom}{m o}%
{%
    \functionTemplate{\mathrm{dom}}{(}{)}{#1}[#2]%
}

\DeclareDocumentCommand{\rng}{m o}%
{%
    \functionTemplate{\mathrm{rng}}{(}{)}{#1}[#2]%
}

\DeclareDocumentCommand{\d}{o}%
{%
    \mathrm{d}\IfNoValueTF{#1}{}{^{#1}}%
}

\DeclareDocumentCommand{\set}{m m o}%
{%
    \mathOrText{\IfNoValueTF{#3}{\left}{#3}\{#1\ \IfNoValueTF{#3}{\left}{#3}\vert\ \vphantom{#1}#2\IfNoValueTF{#3}{\right.}{}\IfNoValueTF{#3}{\right}{#3}\}}%
}

%% file: packages_and_commands/additional_commands.tex
\DeclareDocumentCommand{\randomProcess}{m o}
{%
    \mathOrText{X^{(#1)}\IfNoValueTF{#2}{}{_{#2}}}%
}

\DeclareDocumentCommand{\transformedProcess}{o}
{%
    \mathOrText{Y\IfNoValueTF{#1}{}{_{#1}}}%
}

\DeclareDocumentCommand{\filtration}{o}
{%
    \mathOrText{\mathcal{F}\IfNoValueTF{#1}{}{_{#1}}}%
}

\newcommand*{\timePoint}{\mathOrText{t}}

\newcommand*{\hyperbolicExponent}{\mathOrText{\gamma}}
\newcommand*{\numberOfVertices}{\mathOrText{n}}

\newcommand*{\infectionRate}{\mathOrText{\lambda}}

\newcommand*{\infectionConstant}{\mathOrText{c}}

\newcommand*{\deimmunizationRate}{\mathOrText{\varrho}}
\newcommand*{\contactProcess}{\mathOrText{C}}

\newcommand*{\timeContinuous}[1]{\mathOrText{\tau_{#1}}}

\DeclareDocumentCommand{\lyapunovHelper}{o}
{%
    \mathOrText{f\IfNoValueTF{#1}{}{(#1)}}%
}

\DeclareDocumentCommand{\lyapunovFunction}{m o}
{%
    \mathOrText{F\IfNoValueTF{#2}{}{(#2)}}%
}

\DeclareDocumentCommand{\potentialFunctionIMinusR}{o}
{%
    \mathOrText{H\IfNoValueTF{#1}{}{(#1)}}%
}

\newcommand*{\infectedSet}[1]{\mathOrText{I_{#1}'}}
\newcommand*{\susceptibleSet}[1]{\mathOrText{S_{#1}'}}
\newcommand*{\recoveredSet}[1]{\mathOrText{R_{#1}'}}

\newcommand*{\conStar}[2]{\mathOrText{(#1,#2)}-connected-star}
\newcommand*{\Star}[2]{\mathOrText{(#1,#2)}-star}

%% file: content/introduction.tex
A great amount of processes in the real world follows similar core mechanics that are well modeled as a diffusion process on graphs. This includes information diffusion~\cite{SunCM2023SocialInfluenceMaximization,JiangRF2023PoliticalLearning,LiuRGCXTL2023DOVID19,SunRZLY2022HypergraphAttentionNetwork,RazaqueRKAR2022Survey,SharmaHSL2021FakeNews}, rumor spreading~\cite{kempe2003maximizing}, infections~\cite{Pastor-SatorrasCVMV15Survey,leskovec2007cost}, and computer viruses~\cite{berger2005spread,BorgsCGS10Antidote}. The core dynamics are well described by multiple processes defined in epidemiology~\cite{Pastor-SatorrasCVMV15Survey}. The most common processes are extensions of the so called \emph{SI process}. In this model, vertices are in one of two states: \emph{susceptible~(S)} or \emph{infected~(I)}. The states change following a continuous-time Markov chain on an underlying graph in which the infection randomly spreads with an \emph{infection rate} $\lambda$ across edges.

The SI process spawned several well-known extensions, such as the SIS process (or \emph{contact process}) and the SIRS process. The SIS process introduces reinfection, meaning that infected vertices become susceptible again at a rate commonly normalized to 1. The SIRS model also allows reinfection, but previously infected vertices enter a \emph{recovered (R)} state first, in which they are fully immune to the infection. They transition back into the susceptible state at a rate of $\varrho$, called the \emph{deimmunization rate}. For both the SIS and the SIRS process, the only absorbing state is the one in which every vertex is susceptible. The random time until the process (SIS or SIRS) reaches its absorbing state is called \emph{survival time} and is of particular interest, as it often exhibits a threshold behavior: in a very short interval of the values of $\lambda$ the expected survival time changes from being logarithmic in the graph size to super-polynomial. The region where this occurs is called the \emph{epidemic~threshold}.
An important line of research aims at understanding how properties of the host graph influence the existence and the placement of the epidemic threshold.

The epidemic threshold for the SIS process has been studied extensively both empirically \cite{PhysRevE.86.041125,ferreira2016collective} and theoretically \cite{NamNS22SISinfinite,BorgsCGS10Antidote,ganesh2005effect}. In particular, \textcite{ganesh2005effect} give an upper bound on the epidemic threshold based on the expansion properties of the host graph. Interestingly, this also includes graphs with very bad expansion. For example, on stars with $n$ leaves, the infection already survives super-polynomially long when for an arbitrary constant $\varepsilon > 0$, the infection rate $\lambda$ is of order $n^{-1/2 + \varepsilon}$. \textcite{berger2005spread} use this result in order to show super-polynomial survival times for preferential-attachment graphs, which are scale-free and therefore follow a power-law degree distribution. That is, they contain vertices of very high degree, and if the infection reaches these vertices, then it survives super-polynomially long in their neighborhood.

For the epidemic threshold for the SIRS process, there are many empirical results \cite[e.g.,][]{Wang_2017,PhysRevLett.86.2909,ferreira2016collective} as well as results that use simplifying assumptions such as mean-field approaches~\cite{prakash2012threshold,Bancal10}. On the theoretical side however, the results are quite sparse. \textcite{FrGoKlKrPa2024} give a bound for the epidemic threshold based on the expansion property of a (sub-)graph similar to the bound for the SIS process. However, they also show, that infections on stars never survive super-polynomially long when~$\varrho$ is assumed to be constant, regardless of the choice of~$\lambda$. Recently, \textcite{lam2024optimal} and \textcite{GoebelKKP25Resistance} independently showed that a polynomial expected survival time is achieved above the same threshold at which it becomes super-polynomial for the SIS process. The polynomial then keeps growing for larger $\lambda$ until it stops once $\lambda$ is constant. This implies that large star subgraphs alone are not sufficient in the SIRS model to show super-polynomial survival time thresholds.

In combination, these results suggest that recovered vertices have a big influence on graphs with very few paths between different vertices, whereas the impact of recovered vertices on graphs with many paths between each pair of vertices is far less pronounced.
However, since stars and expanders are structurally very far apart, it remains, up to date, unclear whether an epidemic threshold emerges for the SIRS process already for graphs with a far smaller degree of connectedness than present in expanders.

\subsection{Our contribution}

In this work, we identify a latent structure, namely \emph{connected stars}, of the host graph that results in an epidemic threshold for the SIRS process that requires far fewer connections between vertices than expanders. \emph{Connected stars} are multiple stars whose centers induce a connected component, e.g., a path.
We furthermore show that connected stars are present in many random-graph models that follow a power-law degree distribution.
Since such a distribution has been observed in a broad range of real-world networks~\cite{barabasi1999emergence}, this suggests that connected stars are present in many real-world graphs, effectively resulting in an epidemic threshold for the SIRS process similar to that for the SIS process.

In more detail, we show for graphs with~$n$ vertices that having $\lfloor\ln^2(n)\rfloor$ stars with $\bigOmega{n^c}$ leaves each and whose centers form a connected component results in a lower bound of the expected SIRS survival time of order $n^{\ln(n)}$ (\Cref{cor:survival}).
More generally, we derive a lower bound for the expected SIRS survival time on graphs that have a subgraph of~$k$ connected stars with~$\ell$ leaves each (\Cref{thm:survival}).

We note that our result from \Cref{cor:survival} is reasonably tight, as any constant number of connected stars still results, for constant deimmunization rates and arbitrary infection rates, in an at most polynomial expected SIRS survival time.
This follows from the upper bound of the SIRS survival time on single stars by \textcite{FrGoKlKrPa2024}, which uses that it only takes a polynomial time until the star center stays resistant for a longer time than it takes for all other vertices to lose their infection. This can be extended to show a polynomial upper bound for graphs with a constant-size vertex cover by bounding the time until vertices lose the infection while all of the vertices in the vertex cover stay resistant.

In \Cref{sec:real_world_graphs}, we show that many real-world graph models contain connected stars as subgraphs. The main property of the graph models that we use is that they are scale-free, meaning that their degree distribution follows a power-law. This ensures that the graphs contain many high-degree vertices that fill the role of the star centers. Note that in a power-law distribution, the $\ln^2(n)$ highest-degree vertices likely all have a degree that is only smaller than the largest degree by a poly-logarithmic factor. Hence, by considering multiple stars, the individual stars do not become substantially smaller.

In many scale-free graphs, the highest-degree vertices tend to be well connected. We make that explicit in \Cref{sec:con_star_in_real_graphs} for hyperbolic random graphs, geometric inhomogeneous random graphs, and Chung-Lu graphs with power-law kernel. Thus, \Cref{cor:survival} is directly applicable to these random graph models, yielding respective survival time bounds (\Cref{cor:scale_free_survival}). Similar results can be obtained on any scale-free graph as long as it can be shown that the highest degree vertices likely contain a connected subgraph.

Last, we compare our epidemic threshold for scale-free graphs from \Cref{cor:scale_free_survival} to a threshold by \textcite{FrGoKlKrPa2024} that is based on expander subgraphs. Either threshold depends on the power-law exponent $\gamma$ and the deimmunization rate $\varrho$ of the SIRS process. Generally, our threshold is better than the expansion threshold once $\gamma$ is sufficiently large, as then the central clique becomes very small. The exact point where it becomes better changes based on $\varrho$. Our threshold is better for larger values of $\varrho$ and becomes worse for small $\varrho$, while the expander threshold does not depend on $\varrho$ at all.

%% file: content/prelims.tex
We first introduce general notation before we define the SIRS process formally.
Then, we define important graph classes that we study.
Some mathematical tools that are important to our analysis can be found in \Cref{sec:mathTools}.
We note that our notation for SIRS processes follows \textcite{FrGoKlKrPa2024} in order to allow for an easier comparison of our results to theirs.

\paragraph{General notation and conventions.}
For all $a, b \in \N$, we define $[a .. b] = [a, b] \cap \N$.
Graphs in this work are simple and undirected, and
all of our big-O notation refers to asymptotics in the number of vertices~$n$.
We say that an event~$A$ occurs \emph{asymptotically almost surely} (a.a.s.) if and only if $\Pr{A} = 1 - \smallO{1}$. If $\Pr{\overline{A}}$ is super-polynomially small, we say it occurs \emph{with high probability} (w.h.p.).

A \emph{Poisson process of rate $r \in \R_{> 0}$} is a homogeneous Poisson point process that outputs almost surely a (random) countably infinite subset of~$\R_{\geq 0}$ where the smallest output as well as the difference between two neighboring outputs each follow an exponential distribution with parameter~$r$, independent of any other random events.

\paragraph{SIRS processes.}

An SIRS process runs on a graph $G=(V,E)$ and is parametrized by an \emph{infection rate} $\infectionRate \in \R_{>0}$ and a \emph{deimmunization rate} $\deimmunizationRate \in \R_{>0}$.
These two rates determine the speed at which vertices change from~S to~I and from~R to~S, respectively, noting that the rate from~I to~R is normalized to~$1$.

Each edge $e \in E$ has a corresponding Poisson process~$M_e$ of rate $\infectionRate$, and each vertex $v \in V$ has two corresponding Poisson processes $N_v$ and $O_v$ with rate $1$ and $\deimmunizationRate$, respectively. We call these processes \emph{clocks}, and when a time point $\timePoint \in \R_{\geq 0}$ is part of a clock's output, we say that the clock \emph{triggers} at~$\timePoint$. All of the clocks are independent. Note that, almost surely, no two clocks trigger at the same time. Let $\{\gamma_i\}_{i\in\N}$ with $\gamma_0=0$ denote the (random) sequence of all these triggers.

We define an SIRS process $(C_t)_{t \in \R_{\geq 0}}$ to be a process that partitions~$V$ for all time points $\timePoint \in \R_{\geq 0}$ into the set~$\susceptibleSet{\timePoint}$ of \emph{susceptible} vertices, the set~$\infectedSet{\timePoint}$ of \emph{infected} vertices, and the set~$\recoveredSet{\timePoint}$ of \emph{recovered} vertices. That is, $C_t = (\susceptibleSet{\timePoint}, \infectedSet{\timePoint}, \recoveredSet{\timePoint})$. The value of~$C_0$ is part of the input of the process. The other values are defined inductively and only change when clocks trigger. That is, for all $i \in \N$, the process~$C$ is constant on $[\gamma_i, \gamma_{i+1})$.
The transitions are defined for all $i \in \N$ and $s \in [\gamma_i, \gamma_{i + 1})$ as follows:

\begin{itemize}
      \item \textbf{Susceptible to infected.}
            Let $e = \{u, v\} \in E$ with $\gamma_{i + 1} \in M_e$ and $u \in \infectedSet{s}$ as well as $v \in \susceptibleSet{s}$.
            Then for all $t \in [\gamma_{i + 1}, \gamma_{i + 2})$, we have $u, v \in \infectedSet{t}$.
            We say that \emph{$u$ infects~$v$} (at time~$\gamma_{i + 1}$).
      \item \textbf{Infected to recovered.}
            Let $v \in V$ with $\gamma_{i + 1} \in N_v$ and $v \in \infectedSet{s}$.
            Then for all $t \in [\gamma_{i + 1}, \gamma_{i + 2})$, we have $v \in \recoveredSet{t}$.
            We say that \emph{$v$ recovers} (at time~$\gamma_{i + 1}$).
      \item \textbf{Recovered to susceptible.}
            Let $v \in V$ with $\gamma_{i + 1} \in O_v$ and $v \in \recoveredSet{s}$.
            Then for all $t \in [\gamma_{i + 1}, \gamma_{i + 2})$, we have $v \in \susceptibleSet{t}$.
            We say that \emph{$v$ becomes susceptible} or that it \emph{loses its immunity} (at time~$\gamma_{i + 1}$).
\end{itemize}

No transitions besides the ones above occur. Note that some triggers do not result in a change of state. Hence, in the remaining paper, we only consider times at which state changes occur. Formally, we consider $\{\gamma_0\} \cup \{\gamma_i \mid i \in \N_{\geq 1} \land \contactProcess_{\gamma_i} \neq \contactProcess_{\gamma_{i - 1}}\}$, which we index by the increasing sequence $\{\timeContinuous{i}\}_{i \in \N}$. For all $i \in \N$, we call $\timeContinuous{i}$ the $i$-th \emph{step} of~$C$.

The focus of our analysis is the \emph{survival time} $T \coloneqq \inf \{\timePoint \in \R_{\geq 0} \mid \infectedSet{\timePoint} = \emptyset\}$  of~$C$, and we say that the infection \emph{dies out} or \emph{goes extinct} at~$T$. When $T$ is in expectation super-polynomial in $n$, we say that the infection becomes \emph{epidemic}. The \emph{epidemic threshold} is the infimum of the regime for the infection rate~$\infectionRate$ in that the infection becomes epidemic.

In most lemmas and theorems, we consider a process starting at some given time $t \in \R_{\geq0}$. In all these cases, we consider~$t$ to be some stopping time with respect to the natural filtration of the SIRS process. That is,~$t$ is chosen without any knowledge of how the process behaves in the future. As the SIRS process changes based on Poisson processes, which are memoryless, the behavior also does not depend on the past and is only dependent on the state of the system at time~$t$.

\paragraph{Important graph classes.}
A \emph{star} is a graph over $m \in \N_{\geq 1}$ vertices with one distinct vertex (the \emph{center}) of degree $m - 1$, and all other vertices (the \emph{leaves}) with degree~$1$.

Given $k, \ell \in \N_{\geq 1}$, a concept central to our analysis is a graph that consists of exactly~$k$ stars, each with exactly~$\ell$ leaves.
We call such a graph a \emph{\Star{k}{\ell}}.
If~$k$ and~$\ell$ are not specified, we may also refer to such a graph as a \emph{disjointed star}.
Moreover, if the~$k$ centers induce a connected component, we call the graph a \emph{\conStar{k}{\ell}}, or just a \emph{connected star} if~$k$ and~$\ell$ are not specified.

We show in \Cref{sec:real_world_graphs} that disjointed stars are w.h.p. subgraphs in graphs known as \emph{random scale-free graphs}, whose degree distributions follow a power-law.
Formally, given a constant \emph{power-law exponent $\gamma \in \R_{> 0}$}, a random scale-free graph is a random graph~$G$ over $n \in \N_{\geq 1}$ vertices such that, w.h.p., for each degree $d \in [1 .. n]$, the fraction of vertices in~$G$ with degree~$d$ is proportional to $d^{-\gamma}$.
We note $\gamma >2$ implies that the average degree of~$G$ is constant, and we obtain the exact fraction of vertices with degree~$d$ by multiplying~$d^{-\gamma}$ with a constant $C\in \R_{>0}$.
However,~$C$ is not essential to our results as it is absorbed by the big-O notation, which is why we usually omit it.

We are interested in three common random scale-free graph models, which we show contain w.h.p. connected stars of a decent size.
We define these three models in the following in the level of detail that is sufficient for our purposes.
For more information, we refer to the references that we give.

\textbf{Hyperbolic random graphs}~\cite{krioukov2010hyperbolic} (HRGs) are generated by distributing points uniformly at random on a hyperbolic disk of radius $R \in \R_{> 0}$ and then connecting two vertices if and only if they are within distance~$R$ of each other. \textcite[Theorem~2.2]{gugelmann2012random} showed that the degree distribution of HRGs follows a power-law with exponent $\gamma \in \R_{> 0}$ that depends on the model parameters. The degree of vertices follows a binomial distribution around its expected degree, where vertices closer to the center have a higher degree \cite[Theorem~3.2]{gugelmann2012random}. All the vertices at distance at most~$R/2$ from the center form a clique. This distance implies, for some constant $c \in \R_{>0}$, an expected degree of at least $c\sqrt{n}$ while vertices further away have a smaller degree than that.

\textbf{Chung--Lu graphs}~\cite{chung2002average} are obtained by defining an expected-degree sequence and then connecting each pair of vertices independently with probability proportional to the product of their expected degrees~\cite{chung2002average}. This assures that the chosen expected degrees are indeed the expectation of the degree in the graph. The actual degree is binomially distributed with this mean. We are interested in the case where the expected-degree sequence follows a power-law with exponent $\gamma \in (2, 3)$. In the definition, the probability of two vertices being connected is capped at one, and for $\gamma \in (2, 3)$, two vertices with a product of their expected degrees that is bigger than the expected number of edges in the graph are connected with probability~$1$. In particular, there exists a constant $c \in \R_{>0}$ such that all vertices with expected degree at least $c \sqrt{n}$ are connected to each other.

\textbf{Geometric inhomogeneous random graphs}~\cite{bringmann2019geometric} (GIRGs) are a modification of Chung--Lu graphs that add a geometric component. Each vertex has a position in the $d$-dimensional Torus in addition to its expected degree. Vertices are more likely to connect to each other the closer they are. However, in regards to the properties considered in this paper, GRIGs behave the same as Chung--Lu graphs. The degree distribution of GIRGs follows a power-law with exponent~$\gamma$, typically assumed to be between~$2$ and~$3$~\cite[Theorem~2.1]{bringmann2019geometric}. Moreover, the degree of each vertex follows a binomial distribution around the mean based on their weight. Furthermore, vertices whose expected degrees multiply to more than the expected number of edges in the graph are by definition always connected. Hence, there again exists a constant $c \in \R_{>0}$ such that all vertices with expected degree at least $c \sqrt{n}$ are connected to each other.

%% file: content/SIRS_connected_stars.tex
Our main result is \Cref{thm:survival}, which proves a general lower bound of the SIRS survival time on $k \in \N_{\geq 1}$ connected stars.
The bound relies heavily on a probability $p \in (0, 1)$, which is the probability of the infection dying out in a single star in a time interval of constant length.
Whenever a star obtains sufficiently many infected vertices, we say it becomes \emph{active}. It becomes \emph{inactive} again when none of its vertices are infected.
If the infection rate $\lambda \in \smallOmega{kp}$, currently inactive stars become activated faster than active stars become inactive, leading to an expected survival time exponential in~$k$.

\begin{restatable}{theorem}{Survival}\label{thm:survival}
    Let $k, \ell \in \N_{\geq 1}$, and let~$G$ be a graph over $n \in \N_{\geq 1}$ vertices with a \conStar{k}{\ell} as a subgraph. Moreover, let $\varepsilon \in \R_{>0}$ be a constant, and let~$C$ be an SIRS process on~$G$ with infection rate $\lambda \in (0, 1]$ such that $\lambda \in \bigOmega{\ell^{-1/2+\varepsilon}}$ and with constant deimmunization rate $\varrho \in \R_{> 0}$ that starts with one of the star centers infected and all other vertices susceptible. Let~$T$ be the survival time of~$C$. Then there is a $p \in \bigTheta{(\infectionRate^2\ell)^{-(1-\varepsilon)\deimmunizationRate}}$ such that $\lambda \in \smallOmega{kp}$ implies that $\E{T} \in \bigOmega{(kp)^{-1}\left(\lambda/(k p)\right)^{k-1}}[\big]$.
\end{restatable}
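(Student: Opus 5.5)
The proof reduces the SIRS dynamics on the \conStar{k}{\ell} to a birth--death chain on the number of \emph{active} stars, and then applies the standard gambler's-ruin estimate for a chain whose drift points away from~$0$. Fix a constant time window length. A star is \emph{active} when its center is infected or at least some $\Theta(\lambda\ell)$ of its leaves are infected or susceptible-and-ready. Restricted to the subgraph (monotonicity lets us discard other edges only when bounding from below; since SIRS is not monotone, we instead analyze the process on the full~$G$ while only tracking events inside the connected star), I would prove two single-star lemmas.

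\textbf{(i) Deactivation.} An active star becomes inactive within a constant-length window with probability at most $p \in \Theta((\lambda^2\ell)^{-(1-\varepsilon)\varrho})$. The intuition follows the single-star analysis of \textcite{FrGoKlKrPa2024}. While the center is infected, it infects about $\lambda\ell$ leaves per unit time. Dying out requires the center to recover and then stay recovered for roughly $(1-\varepsilon)\ln(\lambda^2\ell)$ time, long enough for the $\approx\lambda\ell$ infected leaves to all recover before any reinfects it. Each leaf reinfects at rate~$\lambda$, so the leaves reinfect the center at total rate $\approx\lambda^2\ell$ once it is susceptible again. The center stays immune for an $\mathrm{Exp}(\varrho)$ time, which exceeds $(1-\varepsilon)\ln(\lambda^2\ell)$ with probability $(\lambda^2\ell)^{-(1-\varepsilon)\varrho}$. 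The assumption $\lambda\in\Omega(\ell^{-1/2+\varepsilon})$ makes $\lambda^2\ell\ge\ell^{2\varepsilon}$ polynomially large, so concentration (Chernoff bounds on the leaf counts) holds w.h.p.

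\textbf{(ii) Activation.} If a star is inactive but adjacent, via a center edge, to an active star, it becomes active within a constant window with probability $\Omega(\lambda)$. The active neighbor's center is infected a constant fraction of the time, it infects the inactive center at rate~$\lambda$, and that center is susceptible with constant probability because $\varrho$ is constant.

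With these lemmas I would set up a discretized process $Y_i$, the number of active stars after the $i$-th window. Since the centers are connected, while $1\le Y_i<k$ some inactive star has an active neighbor. Hence $Y$ stochastically dominates a chain that moves up with probability $\Omega(\lambda)$, and moves down with probability at most $Y_i p\le kp$ (union bound over active stars). Conditioned on moving, the chain goes up with probability at least $q=\lambda/(\lambda+O(kp))$. Because $\lambda\in\omega(kp)$, the ratio $(1-q)/q=O(kp/\lambda)$ is small. The gambler's-ruin formula then shows that, starting from~$k$ (or from~$1$, after first reaching~$k$ with constant probability), the chain reaches~$0$ before returning to~$k$ with probability $O\bigl((kp/\lambda)^{k-1}\bigr)$. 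Each excursion from~$k$ costs at least one window, and leaving state~$k$ takes $\Omega(1/(kp))$ windows in expectation. Over the geometric number of excursions this gives $\E{T}\in\Omega\bigl((kp)^{-1}(\lambda/(kp))^{k-1}\bigr)$. One simultaneous multi-star change per window must have negligible probability, which holds with small windows.

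The main obstacle is lemma (i) together with the coupling needed to make the windows approximately independent. SIRS lacks monotonicity, so I cannot simply say that extra infections help. A star's deactivation probability must be bounded uniformly over all histories that leave it in an ``active'' configuration, including configurations where many leaves are recovered. I would first try to define activity via a good configuration (center infected and $\Theta(\lambda\ell)$ susceptible leaves) that the star returns to quickly w.h.p. I would then use the strong Markov property at window boundaries, which lets each star be compared to an independent single-star process whose failure probability is bounded by~$p$.
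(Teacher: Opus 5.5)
Your architecture is the paper's: a meta process counting active stars, a per-window deactivation bound of order $p$ taken from single-star survival, an $\Omega(\lambda)$ activation bound across a center--center edge, connectivity of the centers to guarantee an active star next to an inactive one, and domination of a biased gambler's ruin with ratio $\lambda/(kp)$, plus $\Omega(1/(kp))$ time per visit to state $k$. The genuine gap is the notion of an active star, on which both lemmas depend.

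You define activity by the current state (center infected, or $\Theta(\lambda\ell)$ infected leaves). With such a definition lemma (i) is false, already for $G$ equal to the connected star itself. In every constant window, with constant probability the center recovers and stays recovered long enough for the infected leaves to fall below $\lambda d\ell$. So the star turns inactive with probability $\Theta(1)$ per window rather than $p$, while still carrying infected leaves that later reactivate it. The bound $\lambda/(kp)$ on the up/down ratio is then lost. Read literally, the clause about susceptible-and-ready leaves is worse: a star with no infection at all could count as active, so $T$ would no longer be bounded below by the hitting time of $0$. Your fallback, a good configuration the star returns to quickly w.h.p., has the same defect. The star leaves that configuration at rate $1$, and with constant probability it fails to re-enter it within any fixed constant time. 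The only event of probability $O(p)$ is that it never re-enters it, because the star's infection has died out.

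The missing idea is the hysteresis in the paper's definition: a star becomes active once it has at least $\lambda d\ell$ infected vertices and stays active as long as it has at least one infected vertex. Deactivation is then exactly extinction inside the star, which is what your heuristic for (i) actually describes. The paper makes this rigorous by redefining the phases of \cite{gobel2025gradually} so that each phase ends with a center recovery after the leaf count has exceeded $\lambda d\ell$. Each phase is then the last with probability $O(p)$ and lasts at least time $1$ with probability at least $\mathrm{e}^{-1}$, which gives the $\mathrm{Geom}(p)-1$ domination of \Cref{lem:active_geom}.

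The price is paid in (ii), in two places:
\begin{itemize}
\item An active neighbor may currently have a recovered center. You must argue, as in event $E_3$ of the proof of \Cref{lem:infect_probability}, that its center is reinfected within constant time before that star dies out.
\item After the inactive center is infected, it must still reach $\lambda d\ell$ infected vertices before recovering. This holds with constant probability by \Cref{pre:center_infected}, given that w.h.p.\ at least $2c\ell$ leaves are not recovered. The same lemma handles the initial activation from the single infected center.
\end{itemize}

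Finally, do not shrink the windows: the activation bound needs windows of constant length $2(1+1/\varrho+1/(1+\lambda))$. Instead work, as the paper does, with the jump chain $X_{\tau(i)}$, which moves by exactly $\pm 1$ almost surely. In each constant window, compare the probability that the next change is an activation with the probability that it is a deactivation.
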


Note that the condition $\lambda \in \smallOmega{kp}$ is only an implicit bound on $\lambda$ as $p$ depends on $\lambda$. Rearranging the condition to be explicit yields $\lambda \in \smallOmega{\left(\frac{k}{\ell^{(1-\varepsilon)\varrho}}\right)^{1/(1+2(1-\varepsilon)\varrho)}}$.

Furthermore, note that in \Cref{thm:survival}, the infection rate~$\lambda$ is upper-bounded by~$1$. The reason is that we use in the proof that in a single star, the number of infected leaves tends to go towards $\lambda n$, which is not possible for $\lambda>1$ as the number of infected leaves is upper bounded by $n$. However, throughout all the proofs, we only ever use~$\lambda$ as a lower bound for the infection rate. Hence, \Cref{thm:survival} is also applicable for all $\lambda>1$, resulting in the same (static) bound as for $\lambda=1$.

The survival time lower bound from \Cref{thm:survival} is exponential in~$k$ with $\frac{\lambda}{kp}$ in the base. Hence, if a graph with $n \in \N_{\geq 1}$ vertices contains a super-constant number of stars in the \Star{k}{\ell} and if $\lambda$ is bigger than $kp$ by a polynomial in $n$, the lower bound for the expected survival time is super-polynomial. This is in contrast to the results of \textcite{FrGoKlKrPa2024}, where the authors showed that on a single star, an SIRS infection never survives super-polynomially long when the deimmunization rate is constant.

\Cref{cor:survival} makes this super-polynomial lower bound explicit by choosing common values for $k$ and $\ell$ in \Cref{thm:survival}. We show in \Cref{sec:real_world_graphs} that many real-world graph models contain connected stars of the size chosen in \Cref{cor:survival}.

\begin{restatable}{corollary}{CorSurvival}\label{cor:survival}
    Let $n \in \N_{\geq 1}$, let $c\in \R_{>0}$ be a constant, and let~$G$ be a graph over~$n$ vertices that contains a \conStar{\lfloor\ln^2(n)\rfloor}{\bigOmega{n^c}} as subgraph. Moreover, let $\varepsilon \in \R_{>0}$ be a constant, and let~$C$ be an SIRS process on $G$ with constant deimmunization rate $\varrho \in \R_{> 0}$ and infection rate $\lambda \in (0, 1]$ such that $\lambda \in \bigOmega{n^{-\frac{c}{2+1/\deimmunizationRate}+\varepsilon}}[\big]$ that starts with one of the star centers infected and all other vertices susceptible. Let $T$ be the survival time of $C$. Then $\E{T} \in \bigOmega{n^{\ln(n)}}$.
\end{restatable}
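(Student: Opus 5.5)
We obtain \Cref{cor:survival} by applying \Cref{thm:survival} with $k = \lfloor \ln^2(n)\rfloor$ and $\ell \in \bigOmega{n^c}$, and then checking that the resulting bound is at least of order $n^{\ln(n)}$. Since $G$ contains a \conStar{k}{\ell} with these parameters, only two things need verifying. First, the hypotheses $\lambda \in (0,1]$ and $\lambda \in \bigOmega{\ell^{-1/2+\varepsilon'}}$ must hold for some constant $\varepsilon'>0$. Second, we need $\lambda \in \smallOmega{kp}$, where $p \in \bigTheta{(\lambda^2\ell)^{-(1-\varepsilon')\varrho}}$ is the probability supplied by the theorem.

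For the first condition, note that $\frac{c}{2+1/\varrho} < c/2$. Hence $\lambda \in \bigOmega{n^{-c/(2+1/\varrho)+\varepsilon}}$ exceeds $n^{-c/2+\varepsilon} \le \ell^{-1/2+\varepsilon/c}$ up to constants. We may therefore use $\varepsilon' = \min\{\varepsilon/c, \delta\}$ with a small constant $\delta$ fixed below. We may also assume $\varepsilon$ is small, since shrinking $\varepsilon$ only weakens the hypothesis. If $\ell$ is much larger than $n^c$, we can simply use $\ell = \Theta(n^c)$ leaves, by passing to a substar.

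The main step is to show that $\lambda/(kp) \ge n^{\eta}$ for some constant $\eta>0$. Substituting $p$ gives
\[
\frac{\lambda}{kp} = \Theta\bigl(k^{-1}\lambda^{1+2(1-\varepsilon')\varrho}\ell^{(1-\varepsilon')\varrho}\bigr).
\]
Plug in $\lambda \ge n^{-c/(2+1/\varrho)+\varepsilon}$ and $\ell = \Theta(n^c)$. The exponent of $n$ becomes
\[
-\frac{c\,(1+2(1-\varepsilon')\varrho)}{2+1/\varrho} + \varepsilon(1+2(1-\varepsilon')\varrho) + c(1-\varepsilon')\varrho .
\]
At $\varepsilon'=0$, the identity $(1+2\varrho)/(2+1/\varrho)=\varrho$ makes the $c$-terms cancel exactly, leaving $\varepsilon(1+2\varrho)$. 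For $\varepsilon'>0$, the $c$-terms contribute $\varepsilon' c\varrho\bigl(2/(2+1/\varrho) - 1\bigr) = -\varepsilon' c\varrho \cdot \frac{1/\varrho}{2+1/\varrho}$, which is $\bigO{\varepsilon'}$ in absolute value. Choosing $\delta$ small enough relative to $\varepsilon$, $c$ and $\varrho$, the exponent is still at least some constant $\eta>0$. The factor $k^{-1}$ is only polylogarithmic and is absorbed. This gives $\lambda/(kp) \ge n^{\eta}$ for large $n$, and in particular $\lambda \in \smallOmega{kp}$. I expect this exponent bookkeeping, in particular the interplay between the corollary's $\varepsilon$ and the theorem's $\varepsilon'$, to be the only delicate point.

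Finally, \Cref{thm:survival} yields
\[
\E{T} \in \bigOmega{(kp)^{-1}(\lambda/(kp))^{k-1}}.
\]
Here $(kp)^{-1} \ge \lambda^{-1}\cdot\lambda/(kp) \ge 1$. The second factor is at least $n^{\eta(\lfloor\ln^2 n\rfloor-1)}$. Since $\eta(\ln^2 n - 2)\ln n \ge \ln^2 n$ for large $n$, this is at least $n^{\ln n}$, which proves $\E{T}\in\bigOmega{n^{\ln(n)}}$.
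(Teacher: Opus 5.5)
Your proposal is correct and is essentially the paper's proof: instantiate \Cref{thm:survival} with $k=\lfloor\ln^2 n\rfloor$, $\ell=\Theta(n^c)$ and a sufficiently small theorem-level $\varepsilon'$, use the identity $(1+2\varrho)/(2+1/\varrho)=\varrho$ to obtain $\lambda/(kp)\ge n^{\eta}$, and then raise this to the power $k-1$. The differences are cosmetic: the paper first uses $\lambda\le 1$ to replace $\lambda^{1+2(1-\varepsilon')\varrho}$ by $\lambda^{1+2\varrho}$, giving exponent $\varepsilon(1+2\varrho)-\varepsilon' c\varrho$, whereas you track the exact exponent. Also, your inequality $n^{-c/2+\varepsilon}\le \ell^{-1/2+\varepsilon/c}$ is written in the wrong direction for general $\ell$, but this is harmless because you fix $\ell=\Theta(n^c)$.
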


Note that using similar calculations also give a survival time of order $n^{n^\varepsilon}$ when choosing $k=n^\varepsilon$. This increases the threshold for $\lambda$ slightly. However, in this paper we focus on the result in \Cref{cor:survival}.

The threshold for $\infectionRate$ depends on $\deimmunizationRate$, which it normally does not in previous results on this model. In \Cref{sec:real_world_graphs:comparison}, we compare how strong the threshold is on hyperbolic random graphs compared to previously known bounds.

In the following, we outline the proof of \Cref{thm:survival} and discuss the main lemmas that we use in order to prove it before we provide more detailed proof ideas for the main lemmas in \Cref{sec:theory:star-survival,sec:theory:activating-stars}. The full proofs of all of them are given in \Cref{ap:theory}.

\paragraph{Proof idea for \Cref{thm:survival}.}

We abstract the process $C$ into a meta process $A$ that only considers the \conStar{k}{\ell}, which we call~$G^\star$ in the following. Each star of~$G^\star$ is a vertex in $A$, and it is either active or inactive. Each star of~$G^\star$ becomes active once its center is infected and it infects enough of its leaves. The star becomes inactive again when no vertex in it is infected anymore. The corresponding vertex in~$A$ is active if and only if the star in $G^\star$ is active.

The meta process $A$ behaves roughly like an SIS process with parameters that depend on the parameters of~$C$ and the number of leaves in the stars. In our analysis, we focus on the number of active vertices in~$A$. Discretizing~$A$ with respect to the changes of the number of active vertices yields a random process $X \coloneqq (X_t)_{t\in \N}$ that increases or decreases by one in each step. Our goal is to show that this random variable dominates a biased gambler's ruin instance and to then use the expected survival time of this gambler's ruin instance (\Cref{pre:gamblersRuin}) as a lower bound for the original process. To this end, we bound the probability of~$X$ increasing and decreasing.

The process~$X$ decreases whenever one of the stars becomes inactive. In order to bound how often this happens, we show a lower bound on the distribution of how long the stars survive on their own (\Cref{lem:active_geom}). This bound is a generalization of the known lower bound of the expected survival time of single stars in the SIRS model by \textcite{gobel2025gradually}.
We discuss details in \Cref{sec:theory:star-survival}.

The process~$X$ increases whenever a star becomes active. This happens when the infected center of an active star infects the center of an inactive star and this center then infects enough leaves. As the star centers form a connected component, whenever there is at least one active and one inactive star, there are also centers of one active and one inactive star that are connected. In \Cref{lem:infect_probability} we lower-bound the probability of the inactive star being infected in a fixed time interval of constant length.
The details are given in \Cref{sec:theory:activating-stars}.

Last, in the proof of \Cref{thm:survival} in \Cref{ap:theory}, we combine the bounds on how long it takes to increase and decrease~$X$, lower-bounding the probability of an increase in each step. This implies a domination of the gambler's ruin instance and results in a lower bound on the expected survival time of~$X$. In order to transform this into a lower bound of the continuous meta process, we reverse the discretization by lower-bounding the expected time each decrease of~$X$ takes in the meta process. Noting that the meta process survives at most as long as the original process concludes the proof.

\paragraph{The meta process.}
The definition of the meta process is based on the notion of an \emph{active star}. A star becomes active once it has enough infected vertices and then stays active until it reaches a state with no infected vertices. We use this notion to abstract each star into a single meta vertex and then analyze the process on this meta graph.

\begin{definition}[Active star]
    Let $k, \ell \in \N_{\geq 1}$, and let~$G^\star$ be a \conStar{k}{\ell}.
    Moreover, let $\varepsilon \in \R_{>0}$ be a constant, and let~$C$ be an SIRS process on a super-graph of~$G^\star$ (over $n \in \N_{\geq 1}$ vertices) with infection rate $\lambda \in (0, 1]$ such that $\lambda \in \bigOmega{\ell^{-1/2+\varepsilon}}$ and with constant deimmunization rate $\varrho \in \R_{> 0}$. Furthermore, let $c=\frac{\deimmunizationRate}{4(2+\deimmunizationRate)}$, and let $d \in \R_{>0}$ be the largest constant such that $d \leq c/7$ and $\eulerE^{-2d/c}\geq 1-\varepsilon/2$. For each star $s$ of~$G^\star$ and each time $t\in \R_{\geq 0}$, let $I_{s,t}$ be the number of infected vertices of $s$ at time $t$. We say that $s$ is \emph{active} at time $t \in \R_{> 0}$ if and only if there exists a time $t' \in \R_{\leq t}$ such that $I_{s,t'} \geq \lambda d\ell$ and for all $t^* \in [t', t]$, we have $I_{s,t^*}>0$.
\end{definition}

Using this definition, we define the meta process and the discrete meta process of an SIRS infection on connected stars. They are projections of the original process on a smaller state space that only considers which stars are active at each time point.

\begin{definition}[Meta process]
    \label{def:meta-process}
    Let $k, \ell \in \N_{\geq 1}$, and let~$G^\star$ be a \conStar{k}{\ell}.
    Moreover, let $\varepsilon \in \R_{>0}$ be a constant, and let $C$ be an SIRS process on a super-graph of~$G^\star$ (over $n \in \N_{\geq 1}$ vertices) with infection rate $\lambda \in (0, 1]$ such that $\lambda \in \bigOmega{\ell^{-1/2+\varepsilon}}$ and with constant deimmunization rate $\varrho \in \R_{> 0}$. For each star $s$ of~$G^\star$ and each $t \in \R_{\geq 0}$, let $A_{s,t}$ be the indicator random variable of the event that~$s$ is active at time $t$. We call the union of these indicator variables over all~$k$ stars the \emph{meta process} $A$ of $C$. Furthermore, for each $t \in \R_{\geq 0}$, let~$X_t$ be the random variable of the number of active stars at time~$t$, and for each $i \in \N$, let $\tau(i)$ be the $i$-th time point at which $(X_t)_{t \in \R_{\geq 0}} \eqqcolon X$ changes its value, defining $\tau(0) = 0$. We call~$X$ the \emph{simplified meta process} of $C$, and $\left(X_{\tau(i)}\right)_{i \in \N}$ the \emph{discrete version} of $X_t$.
\end{definition}

\subsection{Survival Time of Stars}
\label{sec:theory:star-survival}

We analyze how long a star stays active in the meta process.
To this end, we adapt \cite[Theorem~4.9]{gobel2025gradually}\footnote{The conference version \cite{GoebelKKP25Resistance} does not contain the proofs of these statements, which is why we refer to \cite{gobel2025gradually} instead.}\!\!, which proves a lower bound on how long stars survive in the SIRS model in expectation.
Our resulting \Cref{lem:active_geom} extends this result such that we obtain a bound on the probability to survive for a certain period of time.

The proof of \cite[Theorem~4.9]{gobel2025gradually} splits the time until extinction into phases and shows that the number of these phases until the infection dies out dominates a geometric random variable. However, these phases can be quite short, resulting in weak concentration. In order to get a better concentration bound, we adjust the definition of a phase slightly.
Roughly, \textcite{gobel2025gradually} define a phase to be the time between the number of infected leaves dropping below a certain threshold and then going above it again (or the infection dying out)~\cite[Lemma~4.8]{gobel2025gradually}.
We augment this definition by requiring for each phase that the star center recovers after the number of leaves reaches the threshold.
This extra waiting time leads to stronger concentration results.

Our adjusted \Cref{lem:active_geom} is given below.
In \Cref{ap:theory} we discuss in more detail how to adjust the proof by \textcite{gobel2025gradually} in order for it to apply to our result.
We note that we use $\ell \in \N_{\geq 1}$ for the number of leaves below instead of $n \in \N_{\geq 1}$, as chosen by \textcite{gobel2025gradually}.

\begin{restatable}{lemma}{activeGeom}[{\cite[Theorem~4.9]{gobel2025gradually}}]\label{lem:active_geom}
    Let $G^\star$ be a star with $\ell \in \N_{\geq 1}$ leaves, and let $\varepsilon \in \R_{>0}$ be a constant. Let $C$ be an SIRS process on a super-graph of $G^\star$ with infection rate $\lambda \in (0, 1]$ such that $\lambda \in \smallOmega{\ell^{-1/2}}$ and with constant deimmunization rate $\varrho \in \R_{> 0}$. Moreover, let $t\in \R_{>0}$ be a time at which~$G^\star$ becomes active. Then there is a $p \in \bigTheta{(\infectionRate^2\ell)^{-(1-\varepsilon)\deimmunizationRate}}$ such that the expected time starting from $t$ until all of the vertices in $G^\star$ are not infected dominates a random variable $X \sim \mathrm{Geom}(p)-1$.
\end{restatable}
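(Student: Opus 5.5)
We plan to follow the phase structure of \cite[Theorem~4.9]{gobel2025gradually} and change only what is needed to get a geometric tail rather than a bound on the mean. Start at the activation time~$t$, when at least $\lambda d\ell$ vertices of the star are infected. Call a \emph{phase} the interval that begins when the number of infected leaves is at or above the threshold $\lambda d\ell$. The phase lasts until the center has recovered at least once after that moment, and then either the number of infected leaves returns to the threshold or the star has no infected vertex left. Each phase therefore contains a full recovery period of the center, which is an $\mathrm{Exp}(1)$ waiting time. So every completed phase takes at least constant time with constant probability, and the survival time of the star is at least the number of completed phases up to a constant factor. It thus suffices to show the following: conditioned on the state at the start of a phase, and on the history through the strong Markov property at stopping times, the phase ends with extinction with probability at most~$p$. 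The phases that end with a return to the threshold then dominate $\mathrm{Geom}(p)-1$.

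To bound the extinction probability of one phase, we reuse the two ingredients of \cite[Lemma~4.8]{gobel2025gradually}, with $\ell$ leaves in place of~$n$.

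\textbf{Leaf survival while the center is resistant.} The center recovers and stays in~R for an $\mathrm{Exp}(\varrho)$ time~$Y$. During this time each infected leaf independently stays infected for an $\mathrm{Exp}(1)$ time. The infected leaves are not all gone before~$Y$ unless $Y$ exceeds roughly $\ln(\lambda d\ell)$, up to a $(1-\varepsilon)$ slack. Since $\lambda\in\smallOmega{\ell^{-1/2}}$, this event has probability $\exp(-\varrho(1-\varepsilon)\ln(\lambda^2\ell))=(\lambda^2\ell)^{-(1-\varepsilon)\varrho}$. The constants $c$ and $d$ from the definition of an active star, with $\eulerE^{-2d/c}\ge 1-\varepsilon/2$, are exactly what absorbs the loss from leaves decaying while the center is still infected before it recovers. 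A Chernoff bound shows that at least $\lambda d\ell\,\eulerE^{-s}$ leaves survive time~$s$ for all relevant~$s$.

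\textbf{Regrowth.} After the center becomes susceptible again, it is reinfected by a surviving leaf with probability $1-\smallO{1}$. Once infected, the center drives the infected-leaf count toward $\Theta(\lambda\ell)$, that is, back above $\lambda d\ell$, before it recovers again, except with probability $\bigO{p}$. Here we use drift and concentration bounds from \Cref{sec:mathTools}, a Chernoff bound on the number of leaves infected during an $\mathrm{Exp}(1)$ infection period of the center, as in the original proof.

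Edges to the super-graph only add infections, so the process on $G^\star$ alone is dominated and all these bounds carry over. Combining, each phase ends in extinction with probability $\bigTheta{(\lambda^2\ell)^{-(1-\varepsilon)\varrho}}$, and we take $p$ to be this quantity. The phases are almost independent by memorylessness, which yields the geometric domination.

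The main obstacle is making the per-phase failure bound \emph{uniform} over the starting state of each phase. A phase may start with the center in any state and the leaf count just at the threshold. We also need to check that the added requirement ``the center recovers after reaching the threshold'' does not push the failure probability above $\bigO{p}$. We would first treat the worst case, where the center recovers immediately when the threshold is reached, and show that all other starting configurations stochastically dominate it by a monotone coupling of the clocks.
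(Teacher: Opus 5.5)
Your skeleton matches the paper's. Phases each contain a recovery of the center, so each lasts time at least~$1$ with probability $\mathrm{e}^{-1}$, independently. The per-phase termination probability of order $p$ is imported from \cite[Lemma~4.8]{gobel2025gradually}, and a thinning argument gives the geometric tail. Starting phases at the threshold crossing instead of below it changes nothing. The problems lie in how you justify the per-phase bound.

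Your regrowth step is false as stated. Once reinfected, the center fails to push the infected-leaf count back above $\lambda d\ell$ before recovering with a positive \emph{constant} probability, not with probability $\bigO{p}$; \Cref{pre:center_infected} only caps this failure probability at~$\varepsilon$. A phase ends in extinction with probability $\bigO{p}$ only because each failed attempt is followed by another resistant period of the center. Extinction then needs the resistant time accumulated over this geometric number of attempts to exceed roughly $\ln(\lambda^2\ell)$. It is not $\ln(\lambda d\ell)$, because reinfecting the center is already no longer likely once fewer than about $1/\lambda$ infected leaves remain. This accumulation is what the factor $1-\varepsilon$ in the exponent accounts for: a $\mathrm{Geom}(1-\varepsilon)$ number of independent $\mathrm{Exp}(\varrho)$ periods sums to an $\mathrm{Exp}((1-\varepsilon)\varrho)$ variable. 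Your phases already allow several attempts, so this is repairable by running the argument of \cite[Lemma~4.8]{gobel2025gradually} verbatim. Your two-ingredient sketch, however, does not yield the bound.

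The more serious problem is that the tool you propose for the ``main obstacle'' does not exist. The SIRS process is not monotone: an additional or earlier infection can put a vertex into state~R exactly when it would otherwise be susceptible or infected. So no monotone coupling shows that other starting configurations dominate the ``center recovers immediately'' case; a longer-infected center, for instance, leaves more recovered leaves for the next regrowth. For the same reason, edges to the super-graph do not make the process restricted to $G^\star$ a dominated one. The cited estimates are instead stated directly for super-graphs.

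The paper needs no coupling. It notes that the argument of \cite[Lemma~4.8]{gobel2025gradually} uses only two things. The first is the bound of \cite[Corollary~4.7]{gobel2025gradually} on how many leaves recover during center-infected periods, which the one extra center-infected period per modified phase does not change asymptotically. The second is the event that at least $2c\ell$ vertices stay non-recovered throughout the phase. This event is the ingredient missing from your proposal, and it is what makes the per-phase estimate insensitive to the state in which a phase starts. By \cite[Lemma~4.4]{gobel2025gradually} it fails only with probability exponentially small in~$\ell$. Adding that failure probability turns the per-phase bound $q$ into $2q$, and the thinning then gives $\mathrm{Geom}(2q\mathrm{e})-1$.
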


\subsection{Activating New Stars}
\label{sec:theory:activating-stars}

We analyze the probability that an active star $S_1$ activates an adjacent inactive star~$S_2$ within a time frame of constant length. To this end, we split this event into multiple events occurring in order, and we lower-bound each of their probabilities. The first event is the loss of immunity of the center of~$S_2$ within a time frame of length $1/\varrho$. This happens with constant probability by the definition of deimmunization clocks. The second event is the loss of immunity of the center of~$S_1$ within a time frame of length $1/\varrho$ after the first event, which happens with constant probability for the same reason as before. The third event is the infection of the center of~$S_1$ within a time frame of length $1/(1+\lambda)$ after event 2. This happens with constant probability as it is unlikely for the infection in $S_1$ to die out and conditional on the center of $S_1$ being infected again, the time until that happens is dominated by an exponential distribution with rate $1+\lambda$. The fourth event is the transmission of the infection from the center of $S_1$ to the center of $S_2$ within a time frame of length $1/(1+\lambda)$ after event 3. This happens with a probability in $\bigTheta{\lambda}$ as it requires the infection clock on the edge to trigger before the center of $S_1$ heals. The last event is the activation of $S_2$ by infecting enough leaves in a time frame of length $1$ after event 4. By \Cref{pre:center_infected} this happens with constant probability. Combining all of the events yields \Cref{lem:infect_probability}.

\begin{restatable}{lemma}{infectProbability}\label{lem:infect_probability}
    Let $G$ be a graph over $n \in \N_{\geq 1}$ vertices that contains two stars, $S_1$ and $S_2$, with $\ell \in \N_{\geq 1}$ leaves each and whose centers are connected. Let $\varepsilon \in \R_{> 0}$ be a constant, and let~$C$ be an SIRS process on~$G$ with infection rate $\lambda \in (0, 1]$ such that $\lambda \in \bigOmega{\ell^{-1/2+\varepsilon}}$ and with constant deimmunization rate $\varrho \in \R_{> 0}$. Let $T$ be a stopping time with respect to the meta process $A$ of $C$ such that $S_1$ is active and $S_2$ inactive at~$T$. The probability of the event~$E$ that $S_1$ infects $S_2$ within a time interval $I$ of length $2(1 + 1/\varrho + 1/(1+\lambda))$ starting from~$T$ is in $\bigOmega{\lambda}$.
\end{restatable}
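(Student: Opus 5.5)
We follow the five-event decomposition sketched above and show that the product of the conditional probabilities, each taken at a stopping time, is in $\bigOmega{\lambda}$. Let $c_1,c_2$ be the centers of $S_1,S_2$, and consider the chain of sub-intervals of $I$, each started at the stopping time where the previous event completed. By the strong Markov property of the clocks, the probability of $E$ is at least the product of the conditional lower bounds below.

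\textbf{Events 1 and 2 (centers become susceptible).} If $c_2$ is recovered at $T$, its deimmunization clock $O_{c_2}$ triggers within time $1/\varrho$ with probability $1-\eulerE^{-1}$. Next, to prevent $c_2$ from becoming infected by a third vertex and then recovered, we intersect with the event that $N_{c_2}$ and all $M_e$ at $c_2$ are irrelevant. This is awkward because $c_2$ may have huge degree in $G$. It is simpler to observe that if $c_2$ gets infected by anything during the window, $S_2$ may already activate, so this case only helps. We therefore treat ``$c_2$ is susceptible or infected'' as the good state. The same reasoning with $O_{c_1}$ gives a constant probability that $c_1$ is no longer recovered. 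A point to check is that $c_2$ does not return to R afterwards. For this we require that $N_{c_2}$ does not trigger during the remaining constant-length window, which costs a constant factor; if $c_2$ is infected in the meantime, we handle it as in event~5.

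\textbf{Event 3 (re-infecting $c_1$) — the main obstacle.} We need $S_1$ to still have many infected leaves when $c_1$ becomes susceptible. Otherwise $S_1$ may be active with few infected vertices, since activity only requires $I_{s,t}>0$ since the last time it exceeded $\lambda d\ell$. I would first try to argue via the phase structure behind \Cref{lem:active_geom}: with constant probability the star is in a state with $\bigOmega{\lambda\ell}$ infected leaves. This is not automatic at an arbitrary stopping time, however. The fallback is to note that any infected leaf recovers at rate $1$, so leaves infected at $T$ survive a constant time with constant probability; if $I_{S_1,T}\geq 1$, at least one infected vertex of $S_1$ persists. If that vertex is $c_1$ itself, we are done with event~3. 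If it is a leaf, it infects the susceptible $c_1$ at rate $\lambda$, which would cost another factor $\lambda$. To avoid losing this factor, I would strengthen the stopping time: condition on $S_1$ having at least $\lambda d\ell/2$ infected leaves. The time until $c_1$ is infected is then dominated by an exponential of rate $\bigOmega{\lambda^2 d\ell}\geq 1+\lambda$, since $\lambda^2\ell\to\infty$. So within time $1/(1+\lambda)$ the center is infected with constant probability, and at least one infected leaf persists meanwhile.

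\textbf{Events 4 and 5.} Once $c_1$ is infected and $c_2$ is susceptible, $M_{\{c_1,c_2\}}$ triggers before $N_{c_1}$ and within time $1/(1+\lambda)$ with probability $\frac{\lambda}{1+\lambda}(1-\eulerE^{-1})\in\bigOmega{\lambda}$. Once $c_2$ is infected, \Cref{pre:center_infected} gives that within time $1$ the star $S_2$ reaches $\lambda d\ell$ infected vertices with constant probability, so $S_2$ becomes active. The total elapsed time is at most $2/\varrho+2/(1+\lambda)+1$, which is at most the length of $I$. Multiplying the constant factors from events 1, 2, 3 and 5 with the $\bigOmega{\lambda}$ from event 4 yields $\Pr{E}\in\bigOmega{\lambda}$.
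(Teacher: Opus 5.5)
\textbf{The gap is in your event 3.} Your events 1, 2, 4 and 5 follow the paper's decomposition. Your requirement that $N_{c_2}$ stay silent, so that $c_2$ cannot be infected from elsewhere and return to the recovered state before $c_1$ reaches it, is more careful than the paper, which simply declares such alternatives harmless.

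The lemma only assumes that $S_1$ is \emph{active} at a stopping time of the meta process. As you note yourself, this is compatible with very few infected vertices in $S_1$. You resolve this by \emph{adding} the assumption that $S_1$ has at least $\lambda d\ell/2$ infected leaves, so that $c_1$ is reinfected at aggregate rate $\Omega(\lambda^2\ell)$. That proves a weaker lemma, not the stated one. Without the extra assumption your fallback only gives $\Omega(\lambda^2)$. Moreover, the proof of \Cref{thm:survival} applies the lemma at the start of arbitrary time intervals, where nothing supplies your assumption.

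\textbf{How the paper handles event 3.} It never needs an explicit count of infected leaves. It argues that, by \Cref{lem:active_geom}, $S_1$ stays active for a time polynomial in $\ell$ with probability at least $3/4$. If $c_1$ were not reinfected, all leaves of $S_1$ would recover within $O(\log \ell)$ time. Hence $c_1$ is reinfected before $S_1$ becomes inactive with probability at least $1/2$.

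The timing then comes from competing exponentials rather than from a large aggregate rate. For each infected leaf, the first of its two relevant clocks (recovery at rate $1$, infecting $c_1$ at rate $\lambda$) triggers after an $\mathrm{Exp}(1+\lambda)$ time, independent of which clock wins. So, conditioned on reinfection occurring, its time is dominated by $\mathrm{Exp}(1+\lambda)$. This gives the constant factor $(1-e^{-1})/2$ for event 3.

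This is the missing idea: obtain a constant reinfection probability from the survival of the active star instead of assuming a large infected population. Be aware that this step uses \Cref{lem:active_geom}, which is stated for the moment a star \emph{becomes} active, at a later, arbitrary time. The paper does this without comment, and it is exactly the subtlety you flagged. A fully rigorous repair along the paper's route would therefore also need that an active star at a meta-process stopping time still survives long with constant probability.
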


%% file: content/application.tex
We show in \Cref{cor:scale_free_survival} that many common scale-free random graph models exhibit an at least super-polynomial expected survival time when an SIRS process is run on them.
In particular, we show that this is the case for hyperbolic random graphs~\cite{krioukov2010hyperbolic} (HRGs), geometric inhomogeneous random graphs~\cite{bringmann2019geometric} (GIRGs), and Chung--Lu graphs~\cite{chung2002average}, in all cases when the power-law exponent of their degree distribution is strictly between~$2$ and~$3$, which is commonly observed in real-world graphs~\cite{barabasi1999emergence}.

\begin{corollary}\label{cor:scale_free_survival}
    Let $\varepsilon \in \R_{>0}$ be a constant, and let~$G$ be a hyperbolic random graph, geometric inhomogeneous random graph, or Chung--Lu graph with $n\in \N_{\geq 1}$ vertices whose degree distribution follows a power-law with exponent $\gamma \in (2,3)$. Let $C$ be an SIRS process on $G$ with constant deimmunization rate $\varrho \in \R_{> 0}$ and infection rate $\lambda \in (0, 1]$ such that $\lambda \in \bigOmega{n^{-\frac{1}{(2+1/\deimmunizationRate)(\gamma-1)}+\varepsilon}}$ that starts with the highest-degree vertex infected and all other vertices susceptible. Let $T$ be the survival time of $C$. Then $\E{T} \in \bigOmega{n^{\ln(n)}}$.
\end{corollary}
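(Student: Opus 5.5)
The plan is to reduce the statement to \Cref{cor:survival}. For that we must show that, w.h.p., each of the three models contains a \conStar{\lfloor\ln^2(n)\rfloor}{\ell} with $\ell \in \bigOmega{n^{c}}$ for $c = \frac{1}{\gamma-1}-\delta$. Here $\delta>0$ is a small constant chosen in terms of $\varepsilon$. We also need the highest-degree vertex to be one of the star centers. The threshold then follows by substitution: with $c=\frac{1}{\gamma-1}-\delta$, the exponent $-\frac{c}{2+1/\varrho}+\varepsilon'$ in \Cref{cor:survival} becomes $-\frac{1}{(2+1/\varrho)(\gamma-1)}+\frac{\delta}{2+1/\varrho}+\varepsilon'$. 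Choosing $\delta$ and $\varepsilon'$ small enough that their combined contribution is at most $\varepsilon$ gives exactly the hypothesis on $\lambda$ in the statement. Since the conclusion concerns an expectation over the process and the graph event fails only with super-polynomially small probability, we condition on the good graph event, on which \Cref{cor:survival} gives $\E{T}\in\bigOmega{n^{\ln(n)}}$.

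\textbf{Step 1: many high-degree vertices.} For a power-law with exponent $\gamma\in(2,3)$, the number of vertices with degree at least $d$ is of order $n d^{1-\gamma}$. Taking $d = n^{1/(\gamma-1)}/\ln^{3}(n)$, this count is of order $\ln^{3(\gamma-1)}(n)\ge \ln^{3}(n)\gg \ln^2(n)$ for large $n$. In all three models degrees are binomial around the expected degree (or weight), so a Chernoff bound makes the actual degree concentrated for these vertices. Hence w.h.p. at least $\ln^2(n)$ vertices have degree at least $n^{1/(\gamma-1)-\delta}$, and the highest-degree vertex is among them. We also need these vertices to have expected degree at least $c'\sqrt{n}$. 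This holds because $\frac{1}{\gamma-1}>\frac12$ for $\gamma<3$, so $n^{1/(\gamma-1)-\delta}\ge c'\sqrt n$ once $\delta$ is small.

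\textbf{Step 2: connected centers.} By the model facts recalled in the preliminaries, all vertices with expected degree at least $c'\sqrt{n}$ form a clique. For HRGs these are the vertices within radius $R/2$; for Chung--Lu graphs and GIRGs they are the pairs whose weight product exceeds the number of edges. So the chosen $k=\lfloor\ln^2(n)\rfloor$ centers induce a clique, which is in particular connected.

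\textbf{Step 3: disjoint leaves.} Each center must receive $\ell$ private leaves that are distinct from each other and from all centers. I would do this greedily. Each center has degree at least $n^{c}$ with $c=\frac{1}{\gamma-1}-\delta$. Removing the at most $k$ other centers and the at most $k\ell$ leaves already assigned to earlier centers leaves enough neighbors if $\ell = n^{c}/(2k+2)$, since then $k+k\ell\le n^{c}/2$. The factor $\ln^2(n)$ lost here is absorbed by shrinking $c$ by an arbitrarily small constant, so $\ell\in\bigOmega{n^{c-\delta}}$. Because leaves only need to be adjacent to their own center in the subgraph, no independence between the leaf sets is required.

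The main obstacle is the concentration in Step 1. We need simultaneously that there are enough vertices of large expected weight or small radius, which is a count of a sum of independent indicators, and that their realized degrees are close to the expected ones. This must hold with super-polynomially small failure probability for the high-probability claim. Since the relevant means are polynomial in $n$ (or at least $\ln^3(n)$ for the count), Chernoff bounds give failure probabilities of the form $e^{-\Omega(\ln^{3}(n))}$ or smaller, which is super-polynomially small. The remaining work is model-specific bookkeeping, for example the radius-to-expected-degree relation in HRGs from \textcite{gugelmann2012random}.
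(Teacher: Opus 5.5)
Your proposal is correct and follows the paper's route. The paper combines \Cref{lem:scale_free_degree} ($\lfloor\ln^2 n\rfloor$ vertices of degree $n^{1/(\gamma-1)}$ up to polylogarithmic factors), \Cref{lem:degree_star} (greedy disjoint leaves) and \Cref{lem:scale_free_conStar} (centers in the central clique), then applies \Cref{cor:survival}; your choice $c=\frac{1}{\gamma-1}-\delta$ and your making the highest-degree vertex a center fill in details the paper leaves implicit, and the one step to state explicitly, as the paper does, is the direction of the clique-membership argument: a Chernoff bound plus a union bound over all vertices shows that w.h.p.\ no vertex of expected degree at most $c'\sqrt{n}$ reaches degree $\ell\in\bigOmega{n^{1/2+\delta'}}$, which is what places the highest-degree vertex in the clique.
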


We prove \Cref{cor:scale_free_survival} by applying \Cref{cor:survival} and showing that all three graph models exhibit sufficiently large connected stars.
In more detail, we first show in \Cref{lem:scale_free_degree} that random scale-free graphs with power-law exponent $\gamma \in \R_{> 2}$ contain w.h.p. $\ln(n)^2$ vertices of degree at least $\left(c n/(\ln(n)^2)\right)^{1/(\gamma-1)}$, which follows directly from the definition of the power-law degree distribution.

Next, we show in \Cref{lem:degree_star} that containing $k \in \N_{\geq 1}$ vertices with degree of at least $d \in \N_{\geq k}$ each is sufficient for a graph to contain a \Star{k}{\lfloor d/k \rfloor-1}. Such a star is obtained by greedily assigning $\lfloor d/k \rfloor-1$ leaves to each of the~$k$ centers without overlapping the set of leaves.

Last, we show in \Cref{lem:scale_free_conStar} that w.h.p. the high-degree vertices of HRGs, GIRGs, and Chung--Lu graphs with power-law kernel form a connected component, which concludes our result.

In the following, we show in \Cref{sec:real_world_graphs:existence-of-connected-stars} that random scale-free graphs have w.h.p. disjointed stars with a sufficiently high degree.
Afterward, we show in \Cref{sec:con_star_in_real_graphs} that these stars are w.h.p. connected in the random graph models of our interest.

Last, in \Cref{sec:real_world_graphs:comparison}, we compare our bounds from \Cref{cor:scale_free_survival} to the results of \textcite{FrGoKlKrPa2024}, who proved expected super-polynomial bounds for SIRS processes on graphs with expanding subgraphs.
In particular, we discuss when our upper bound on the infection threshold is lower (that is, more permissive) than theirs.

\subsection{Disjointed Stars in Scale-free Graphs}
\label{sec:real_world_graphs:existence-of-connected-stars}

We show that scale-free graphs contain w.h.p. a \Star{k}{\ell} for $k=\ln(n)^2$ and $\ell \in \bigTheta{\left( n/\ln(n)^{2\gamma}\right)^{1/(\gamma-1)}}[\big]$. We start by showing that, for some constant $c \in \R_{>0}$, scale-free graphs contain w.h.p. $\lfloor\ln(n)^2\rfloor$ vertices of degree at least $\left(c n/\ln(n)^2\right)^{1/(\gamma-1)}$.

\begin{lemma}\label{lem:scale_free_degree}
    Let $G$ be a random scale-free graph with $n\in \N_{\geq 1}$ vertices that follows a power-law degree distribution with power-law exponent $\gamma \in \R_{>2}$. Then there exists a constant $c\in \R_{>0}$ such that w.h.p. $G$ contains $\lfloor\ln(n)^2\rfloor$ vertices with a degree of at least $\left(c n/\ln(n)^2\right)^{1/(\gamma-1)}$ each.
\end{lemma}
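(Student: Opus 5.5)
We derive this directly from the definition of a random scale-free graph. By that definition, w.h.p., for every degree $d \in [1 .. n]$ the fraction of vertices of degree exactly~$d$ is $C d^{-\gamma}$ for some constant $C \in \R_{>0}$. We condition on this event for the rest of the argument. Since it holds w.h.p., any statement that follows deterministically from it also holds w.h.p.

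Let $D \coloneqq \left(c n/\ln(n)^2\right)^{1/(\gamma-1)}$. We first count the vertices of degree at least~$D$ by summing the exact fractions:
\[
    \sum_{d=\lceil D\rceil}^{n} C n\, d^{-\gamma}.
\]
For $\gamma > 2$, and as long as $D \le n/2$ (which holds for large~$n$ because $1/(\gamma-1) < 1$), a standard integral comparison bounds this sum from below by
\[
    C n \int_{\lceil D\rceil}^{n+1} x^{-\gamma}\,\mathrm{d}x \;\ge\; \frac{C n}{\gamma-1}\Bigl((D+1)^{-(\gamma-1)} - (n+1)^{-(\gamma-1)}\Bigr) \;\ge\; \frac{C' n}{D^{\gamma-1}}
\]
for a constant $C' \in \R_{>0}$. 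Here we use that $(D+1)^{-(\gamma-1)} \ge 2^{-(\gamma-1)} D^{-(\gamma-1)}$ and that $(n+1)^{-(\gamma-1)}$ is of smaller order than $D^{-(\gamma-1)}$.

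Substituting $D^{\gamma-1} = c n/\ln(n)^2$ shows that the count is at least $(C'/c)\ln(n)^2$. We now fix the constant $c \le C'$. This gives at least $\ln(n)^2 \ge \lfloor \ln(n)^2\rfloor$ vertices of degree at least~$D$, which proves the claim.

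The main obstacle is making the power-law property precise. It prescribes a fraction proportional to $d^{-\gamma}$ for every~$d$. Read literally, this can yield non-integral vertex counts at large degrees where the expected count is below~$1$. We therefore interpret the definition as asserting that the count of each degree is within rounding of $C n d^{-\gamma}$. More robustly, we use the tail version: the number of vertices of degree at least~$D$ is $\bigTheta{n D^{1-\gamma}}$ whenever this quantity is $\bigOmega{\ln(n)^2}$.

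Under this reading, rounding errors could only matter in the range where $C n d^{-\gamma} < 1$. We handle this by summing only over the range $d \le (Cn)^{1/\gamma}$. This range is still much larger than~$D$ because $1/\gamma < 1/(\gamma-1)$ and the tail mass beyond it is negligible. So the lower bound survives, possibly after shrinking~$c$ by a further constant factor.
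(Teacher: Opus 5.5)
Your core computation is exactly the paper's argument, and it is correct under the literal per-degree reading of the definition that the paper uses. You sum the prescribed counts $Cnd^{-\gamma}$ over $d\in[\lceil D\rceil..n]$, bound the sum below by an integral to get a tail of order $nD^{-(\gamma-1)}$, substitute $D^{\gamma-1}=cn/\ln(n)^2$, and shrink $c$. The paper plugs in $d'=(cn/\lfloor\ln(n)^2\rfloor)^{1/(\gamma-1)}$ instead of shrinking $c$, which is only a cosmetic difference.

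Your last paragraph, however, which tries to make the argument robust to rounding, is wrong and should be dropped.

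\textbf{The comparison is reversed.} Since $1/\gamma<1/(\gamma-1)$, the cutoff $(Cn)^{1/\gamma}$ is polynomially \emph{smaller} than $D=(cn/\ln(n)^2)^{1/(\gamma-1)}$, not larger. Indeed $CnD^{-\gamma}=(C/c)\ln(n)^2/D\to 0$. So the entire range $[D..n]$ you need lies where the prescribed per-degree count is below~$1$, and summing only over $d\le (Cn)^{1/\gamma}$ discards all of it.

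\textbf{The tail is not negligible.} The mass beyond $(Cn)^{1/\gamma}$ is about $(Cn)^{1/\gamma}/(\gamma-1)$ vertices, and it contains every vertex you are trying to count.

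\textbf{The rounding reading cannot give the lemma.} If each count is only required to be within rounding of $Cnd^{-\gamma}$, then every count for $d>(Cn)^{1/\gamma}$ may be~$0$. In that case there is no vertex of degree at least~$D$, so the lemma does not follow at all.

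\textbf{What actually resolves your concern.} Your worry is legitimate, and the fix is the cumulative form you mention in passing. Assume, or verify for the concrete model, that the number of vertices of degree at least~$d$ is w.h.p.\ $\Theta(nd^{1-\gamma})$ whenever this quantity is $\Omega(\ln(n)^2)$. Plugging in $d=D$ then gives the lemma immediately. This tail bound $F(d)=cnd^{-(\gamma-1)}$ is precisely what the paper's proof extracts from the definition.
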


\begin{proof}
    This result follows directly from the definition of a power-law degree distribution. As the degree distribution of $G$ follows a power-law with exponent $\gamma \in\R_{>2}$, the sum of all degrees of vertices in the graph is linear in~$n$. Hence, for all $d \in [1 .. n]$, by summing the number of vertices with degree $i \in [d .. n]$, there is a constant $c\in \R_{>0}$ such that the number of vertices of degree at least $d$ is w.h.p. at least $F(d) \coloneqq cn d^{-(\gamma-1)}$. Choosing $d' = \left(c n/\lfloor\ln(n)^2\rfloor\right)^{1/(\gamma-1)}$ yields that w.h.p. there are at least $F(d')= cn (d')^{-(\gamma-1)} = cn\frac{\lfloor\ln(n)^2\rfloor}{cn} =\lfloor\ln(n)^2\rfloor$ vertices of degree at least~$d'$ in~$G$.
\end{proof}

Next, we show that a graph $G$ with $k \in \N_{\geq 1}$ vertices of degree at least $d \in \N_{\geq k}$ each contains a \Star{k}{\lfloor d/k\rfloor-1}. It is obtained by greedily assigning each of the~$k$ high-degree vertices $\lfloor d/k\rfloor -1$ of their neighbors as leaves.

\begin{lemma}\label{lem:degree_star}
    Let $k \in \N_{\geq 1}$ and $d \in \N_{\geq k}$, and let $G$ be a graph that contains $k$ vertices, each of degree at least $d$. Then $G$ contains a \Star{k}{\lfloor d/k\rfloor-1} as a subgraph.
\end{lemma}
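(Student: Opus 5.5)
We build the star greedily, one center at a time. Let $v_1, \dots, v_k$ be the $k$ vertices of degree at least~$d$, and set $m \coloneqq \lfloor d/k \rfloor - 1$. We assign to each $v_i$, in order $i = 1, \dots, k$, a set $L_i$ of exactly $m$ leaves. Each $L_i$ is chosen from the neighbors of~$v_i$. It must avoid the centers $\{v_1, \dots, v_k\}$ and all previously assigned leaf sets $L_1, \dots, L_{i-1}$. The subgraph consisting of the edges $\{v_i, u\}$ for $u \in L_i$ is then a disjoint union of $k$ stars, each with center~$v_i$ and exactly~$m$ leaves. This is precisely a \Star{k}{\lfloor d/k\rfloor-1}.

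The only thing to verify is that the greedy choice never gets stuck. When we process~$v_i$, the number of neighbors of~$v_i$ that are forbidden is at most
\[
(k-1) + (i-1)m \leq (k-1) + (k-1)m = (k-1)(m+1).
\]
The term $k-1$ counts the other centers, since $v_i$ itself is not its own neighbor in a simple graph. The term $(i-1)m$ counts the previously used leaves. Hence $v_i$ has at least $d - (k-1)(m+1)$ available neighbors. Since $m+1 = \lfloor d/k\rfloor \leq d/k$, we have $(k-1)(m+1) \leq d - d/k$. So at least $d/k \geq \lfloor d/k \rfloor = m+1 > m$ neighbors remain, and we can pick $L_i$.

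The hypothesis $d \geq k$ guarantees $m \geq 0$, so the statement is meaningful. If $m = 0$, the stars are just isolated centers and the claim is trivial. No step is a real obstacle. The one point needing care is counting the other centers as forbidden, so that no leaf coincides with a center. This is exactly what the $-1$ in $\lfloor d/k\rfloor - 1$ pays for.
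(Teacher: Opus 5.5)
Your proof is correct and is essentially the paper's argument. The paper also greedily assigns $\lfloor d/k\rfloor-1$ fresh neighbors to each $v_i$, forbidding all $k$ centers and the previously chosen leaves, and checks that $d-k-(k-1)(\lfloor d/k\rfloor-1)\geq\lfloor d/k\rfloor-1$; your count of only $k-1$ forbidden centers (since $v_i$ is not its own neighbor) is marginally sharper but changes nothing.
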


\begin{proof}
    Let $(v_i)_{i \in [1 .. k]}$ be vertices with degree at least $d$ in $G$. We build a subgraph~$G'$ of~$G$ by iteratively adding stars to it. In iteration $i \in [1 .. k]$, we add vertex $v_i$ to $G'$ and a set~$N_i$ of $\lfloor d/k\rfloor-1$ of its neighbors that are not in the set $S_i \coloneqq (\bigcup_{j \in [1 .. i - 1]}{N_j}) \cup \bigcup_{j \in [1 .. k]}{\{v_j\}}$. We also add the edges between $v_i$ and these neighbors. This is always possible as there are exactly $k$ vertices in $\bigcup_{j \in [1 .. k]}{\{v_j\}}$, and $(i-1)(\lfloor d/k\rfloor-1)$ vertices in $\bigcup_{j \in [1 .. i - 1]}{N_j}$. As~$v_i$ has at least~$d$ neighbors, it has at least $d-k-(i-1)(\lfloor d/k\rfloor-1)\geq d-k-(k-1)(\lfloor d/k\rfloor-1) \geq \lfloor d/k\rfloor-1$ neighbors outside of $S_i$.

    After all $k$ iterations, $G'$ is a \Star{k}{\lfloor d/k\rfloor-1}.
\end{proof}

Combining \Cref{lem:scale_free_degree,lem:degree_star} gives us that random scale-free graphs contain a \Star{k}{\ell} that is suitably large for \Cref{cor:scale_free_survival} to be applied.

\begin{corollary}\label{cor:scale_free_star}
    Let $G$ be a random scale-free graph with $n\in \N_{\geq 1}$ vertices which follows a power-law degree distribution with power-law exponent $\gamma \in \R_{>2}$. Then there exists a constant $c\in \R_{>0}$ such that for $k=\lfloor\ln(n)^2\rfloor$ and $\ell=\left(c n/\ln(n)^{2\gamma}\right)^{1/(\gamma-1)}$, w.h.p.~$G$ contains a \Star{k}{\ell} as a subgraph.
\end{corollary}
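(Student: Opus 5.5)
The plan is to combine \Cref{lem:scale_free_degree} and \Cref{lem:degree_star} directly. First, I apply \Cref{lem:scale_free_degree} to $G$. This gives a constant $c' \in \R_{>0}$ such that, w.h.p., $G$ contains $k \coloneqq \lfloor\ln(n)^2\rfloor$ vertices, each of degree at least $d' \coloneqq \left(c' n/\ln(n)^2\right)^{1/(\gamma-1)}$. On this event, I apply \Cref{lem:degree_star} with these $k$ vertices and the integer degree bound $d \coloneqq \lceil d' \rceil$, or $\lfloor d' \rfloor$ if integrality of the actual degrees requires it; the degrees are integers, so each is at least $\lceil d'\rceil$. This requires $d \geq k$. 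That holds for all sufficiently large $n$, since $d'$ is polynomial in $n$ (for $\gamma > 2$ the exponent $1/(\gamma-1)$ is positive) while $k$ is polylogarithmic. Small $n$ is absorbed into the constants or the w.h.p. statement. We obtain a \Star{k}{\lfloor d/k\rfloor - 1} as a subgraph.

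It remains to show that $\lfloor d/k \rfloor - 1 \geq \ell = \left(c n/\ln(n)^{2\gamma}\right)^{1/(\gamma-1)}$ for a suitable constant $c$. Since a star with more leaves contains a star with fewer leaves, it suffices to lower-bound the leaf count. I compute
\[
  \frac{d'}{k} \geq \frac{\left(c' n\right)^{1/(\gamma-1)}}{\ln(n)^{2/(\gamma-1)}\,\ln(n)^2} = \left(\frac{c' n}{\ln(n)^{2 + 2(\gamma-1)}}\right)^{1/(\gamma-1)} = \left(\frac{c' n}{\ln(n)^{2\gamma}}\right)^{1/(\gamma-1)} .
\]
This is exactly the claimed form with $c = c'$, up to the floor and the $-1$. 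These are handled by choosing $c \coloneqq c'/2^{\gamma-1}$, say. Then $\ell = \tfrac12 (c'n/\ln(n)^{2\gamma})^{1/(\gamma-1)}$, and for large $n$ we have $\lfloor d'/k\rfloor - 1 \geq d'/k - 2 \geq \ell$, because $d'/k \to \infty$. If $\ell$ needs to be an integer, I take its floor, which changes nothing.

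The only mildly delicate step is this bookkeeping: the exponent arithmetic $\ln(n)^{2/(\gamma-1)} \cdot \ln(n)^2 = \ln(n)^{2\gamma/(\gamma-1)}$, and absorbing the rounding losses into the constant. The probability statement is inherited directly from \Cref{lem:scale_free_degree}, since \Cref{lem:degree_star} is deterministic.
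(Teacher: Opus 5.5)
Your proposal is correct and takes the same route as the paper, which obtains this corollary simply by combining \Cref{lem:scale_free_degree} with \Cref{lem:degree_star}. Your exponent computation $\ln(n)^{2/(\gamma-1)}\cdot\ln(n)^2=\ln(n)^{2\gamma/(\gamma-1)}$, and your choice of a smaller constant to absorb the floor and the $-1$, fill in the bookkeeping that the paper leaves implicit.
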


\subsection{Connected Stars in HRGs, GIRGs, and Chung--Lu Graphs}\label{sec:con_star_in_real_graphs}

We apply \Cref{cor:survival} to HRGs, GIRGs, and Chung--Lu graphs with power-law kernel. Using \Cref{cor:scale_free_star}, we see that all of these models contain w.h.p. large disjointed stars. In order to apply \Cref{cor:survival}, it remains to show that these stars are w.h.p. connected. To this end, we argue that the centers of the disjointed stars from \Cref{cor:scale_free_star} all lie w.h.p. within the central cliques that all of the discussed models contain. Hence, they are connected. We argue these results without diving too deep into the definitions of the models. We only use the properties of the models discussed in the \Cref{sec:preliminaries}.

\begin{lemma}\label{lem:scale_free_conStar}
    Let $G$ be a hyperbolic random graph, geometric inhomogeneous random graph, or Chung--Lu graph with $n\in \N_{\geq 1}$ vertices whose degree distribution follows a power-law with exponent $\gamma \in (2,3)$. Then there exists a constant $c\in \R_{>0}$ such that for $k=\lfloor\ln(n)^2\rfloor$ and $\ell=\left(c n/\ln(n)^{2\gamma}\right)^{1/(\gamma-1)}$, w.h.p. $G$ contains a \conStar{k}{\ell} as a subgraph.
\end{lemma}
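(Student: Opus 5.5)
The plan is to combine \Cref{cor:scale_free_star} with the central-clique property recalled in \Cref{sec:preliminaries}. All three models contain a set of vertices that is a clique: in HRGs the vertices within radius $R/2$ of the center, and in GIRGs and Chung--Lu graphs the vertices of expected degree at least $c'\sqrt{n}$. I will show that, w.h.p., the $\lfloor\ln(n)^2\rfloor$ high-degree vertices used as star centers in \Cref{lem:degree_star} all lie in this clique. Their induced subgraph is then complete, hence connected, and the \Star{k}{\ell} becomes a \conStar{k}{\ell}.

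\textbf{Step 1.} By \Cref{lem:scale_free_degree}, w.h.p.\ there are $k=\lfloor\ln(n)^2\rfloor$ vertices of degree at least $d'=\left(c n/\ln(n)^2\right)^{1/(\gamma-1)}$. Since $\gamma\in(2,3)$, we have $1/(\gamma-1)>1/2$. Hence $d'=n^{1/(\gamma-1)}\cdot\ln(n)^{-2/(\gamma-1)}\cdot c^{1/(\gamma-1)}$ exceeds $C\sqrt{n}$ by a polynomial factor $n^{1/(\gamma-1)-1/2-o(1)}$, for every constant $C$.

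\textbf{Step 2.} Show that w.h.p.\ every vertex of actual degree at least $d'$ has expected degree at least $c'\sqrt{n}$, which places it in the clique. In all three models the degree is binomial around its expected degree. A vertex with expected degree below $c'\sqrt{n}$ reaches degree $d'\ge n^{\delta}c'\sqrt{n}$ (with constant $\delta>0$) only with probability $\exp(-\Omega(d'))$, by a Chernoff bound in the regime far above the mean. This is super-polynomially small, so a union bound over all $n$ vertices gives the claim w.h.p. For HRGs I additionally use that vertices outside radius $R/2$ have expected degree below $c'\sqrt{n}$, which is exactly the statement recalled in the preliminaries.

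\textbf{Step 3.} Apply \Cref{lem:degree_star} with these $k$ centers and $d=d'$. It yields leaves of number $\lfloor d'/k\rfloor-1\ge (c n/\ln(n)^{2\gamma})^{1/(\gamma-1)}$ after adjusting $c$, because $d'/k=(cn)^{1/(\gamma-1)}\ln(n)^{-2/(\gamma-1)-2}$ and $2/(\gamma-1)+2=2\gamma/(\gamma-1)$. The centers lie in the clique, so they induce a connected component. A union bound over the two w.h.p.\ events concludes the proof.

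The main obstacle is Step 2: controlling the concentration of the binomial degree uniformly enough that no low-weight vertex masquerades as a high-degree one. The polynomial gap between $d'$ and $\sqrt{n}$, coming from $\gamma<3$, is what makes the Chernoff tail super-polynomially small.
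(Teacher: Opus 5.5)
Your proposal is correct and follows the paper's route. Like the paper, you obtain the disjointed star from the scale-free lemmas and then use the central clique, with a Chernoff bound and a union bound over all vertices, to show that w.h.p.\ no vertex outside the clique has degree large enough to be a star center, so all centers are pairwise adjacent. The only cosmetic difference is that the paper cites \Cref{cor:scale_free_star} directly and uses the leaf count $\ell \in \bigOmega{n^{1/2+\varepsilon}}$ as the degree threshold, whereas you re-derive the star via \Cref{lem:degree_star} and use the larger center degree $d'$.
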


\begin{proof}
    By \Cref{cor:scale_free_star}, there exists a constant $c\in \R_{>0}$ such that for $k=\lfloor\ln(n)^2\rfloor$ and $\ell=\left(c n/\ln(n)^{2\gamma}\right)^{1/(\gamma-1)}$, w.h.p. $G$ contains a \Star{k}{\ell} as a subgraph. Note that as $\gamma <3$, there exists a constant $\varepsilon \in \R_{>0}$ such that $\ell \in \bigOmega{n^{1/2+\varepsilon}}$.

    Hyperbolic random graphs, geometric inhomogeneous random graphs, and Chung--Lu graphs all contain central cliques. Vertices outside of this clique have, for some constant $c' \in \R_{>0}$, an expected degree of at most $c'\sqrt{n}$. As the actual degree is binomially distributed when given the expected value and as $\ell \in \bigOmega{n^{1/2+\varepsilon}}$, by a Chernoff bound and a union bound over all vertices, w.h.p. there is no vertex outside of the central clique with degree at least~$\ell$. Hence, w.h.p. all of the centers of the \Star{k}{\ell} are in the central clique and therefore connected.
\end{proof}

Combining \Cref{lem:scale_free_conStar} with \Cref{cor:survival} immediately implies \Cref{cor:scale_free_survival}, giving us a bound for the epidemic threshold on HRGs, GIRGs, and Chung--Lu graphs.

\subsection{Comparison of the Epidemic Threshold Between Expander Graphs and Hyperbolic Random Graphs}
\label{sec:real_world_graphs:comparison}

\Cref{cor:scale_free_survival} states a bound for the epidemic threshold on hyperbolic random graphs, based on connected stars. \textcite[Corollary~1.6]{FrGoKlKrPa2024} also state a bound on the epidemic threshold for hyperbolic random graphs, but it is based on the expansion properties of the central clique.
For some constant $c \in \R_{>0}$, \textcite[Corollary~1.6]{FrGoKlKrPa2024} show an upper bound on the threshold of $\infectionRate \geq \infectionConstant \numberOfVertices^{(\hyperbolicExponent-3)/2}$.

In the following, we discuss for which power-law exponents which of the two results leads to a smaller (and thus more permisse) upper bound on the epidemic threshold (\Cref{lem:compare}) and how this relates to the two different properties that are at the foundation of these two results. The proof of \Cref{lem:compare} can be found in \Cref{ap:real_world}

\begin{restatable}{lemma}{compare}\label{lem:compare}
    Let $G$ be a hyperbolic random graph with power-law degree exponent $\gamma \in (2, 3)$. Then the epidemic threshold upper bound from the connected stars (\Cref{cor:scale_free_survival}) is smaller than the expander bound from \cite[Corollary~1.6]{FrGoKlKrPa2024} if and only if $\gamma > 2 + (2\varrho+1)^{-1/2}$.
\end{restatable}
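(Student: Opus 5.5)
The plan is to reduce the comparison to a comparison of the polynomial exponents of the two thresholds and then solve the resulting inequality in~$\gamma$.

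By \Cref{cor:scale_free_survival}, the connected-stars bound requires $\lambda \in \bigOmega{n^{-\frac{1}{(2+1/\varrho)(\gamma-1)}+\varepsilon}}$, where $\varepsilon>0$ is an arbitrary constant. The expander bound of \cite[Corollary~1.6]{FrGoKlKrPa2024} requires $\lambda \geq c\, n^{(\gamma-3)/2}$. Both are powers of~$n$, and $\varepsilon$ can be taken arbitrarily small. So I interpret ``smaller'' asymptotically: our bound is smaller if and only if its exponent without~$\varepsilon$ is strictly smaller,
\begin{equation*}
-\frac{1}{(2+1/\varrho)(\gamma-1)} < \frac{\gamma-3}{2}.
\end{equation*}
If this strict inequality holds, choosing $\varepsilon$ below the gap makes our threshold polynomially smaller. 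If it fails, our threshold is at least as large for every $\varepsilon>0$. I would state this convention explicitly, since it fixes the meaning of ``if and only if''.

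Next comes the algebra. Since $\gamma\in(2,3)$, both $\gamma-1>0$ and $3-\gamma>0$. Multiplying by $-1$, the inequality is equivalent to
\begin{equation*}
\frac{1}{(2+1/\varrho)(\gamma-1)} > \frac{3-\gamma}{2},
\end{equation*}
which, after multiplying by the positive quantity $2(2+1/\varrho)(\gamma-1)$, is equivalent to
\begin{equation*}
2 > (2+1/\varrho)(3-\gamma)(\gamma-1).
\end{equation*}
Substitute $x \coloneqq \gamma-2 \in (0,1)$. Then $(3-\gamma)(\gamma-1)=(1-x)(1+x)=1-x^2$. Multiplying by $\varrho>0$ gives $2\varrho > (2\varrho+1)(1-x^2)$, that is, $x^2(2\varrho+1) > 1$. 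Because $x>0$, this holds exactly when $x > (2\varrho+1)^{-1/2}$, i.e.\ when $\gamma > 2 + (2\varrho+1)^{-1/2}$. Every step is an equivalence, so this gives both directions.

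There is no real obstacle here. The only points needing care are the sign bookkeeping when multiplying through (all factors are positive on $(2,3)$ and for $\varrho>0$) and the $\varepsilon$ convention at the boundary case $\gamma = 2 + (2\varrho+1)^{-1/2}$. At that boundary the exponents coincide, so our bound is not strictly smaller, which is consistent with the strict inequality in the statement.
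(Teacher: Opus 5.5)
Your proposal is correct and follows the paper's proof: you reduce the comparison to a strict inequality between the two exponents with $\varepsilon$ dropped, then solve the resulting quadratic inequality in $\gamma$. The substitution $x=\gamma-2$, which turns $(3-\gamma)(\gamma-1)$ into $1-x^2$, is simply a cleaner way of handling the quadratic than the paper's expansion and choice of root, and your explicit remark on the boundary case is a useful addition.
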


The value~$\gamma$ from \Cref{lem:compare} depends on~$\varrho$, which is a model parameter, and thus leads to different regimes in which which result yields a more permissive threshold.
For the sake of convenience, we call this value of~$\gamma$ (dependent on~$\varrho$) the \emph{meta threshold}.
In general, this meta threshold lies in between~$2$ and~$3$, with \Cref{cor:scale_free_survival} being more permissive when~$\gamma$ is above the threshold and \cite[Corollary~1.6]{FrGoKlKrPa2024} being more permissive otherwise.

More specifically, the bound from \Cref{cor:scale_free_survival} is more permissive when $\gamma$ gets close enough to~$3$. The reason is that when the power-law exponent approaches~$3$, the number of vertices in the central clique of the hyperbolic random graph~$G$ approaches~$0$. As \cite[Corollary~1.6]{FrGoKlKrPa2024} makes use of the size of this central clique for the bound, its threshold becomes arbitrarily bad. In contrast, \Cref{cor:scale_free_survival} only needs a logarithmic number of high degree vertices, which is w.h.p. there as long as $\gamma<3$. The degree of these high-degree vertices converges to $n^{1/2}$, hence the threshold of \Cref{cor:scale_free_survival} does not become arbitrarily bad.

On the other hand, when $\gamma$ gets close enough to~$2$, the bound from \cite[Corollary~1.6]{FrGoKlKrPa2024} becomes more permissive. The reason is that the size of the central clique approaches $n^{1/2}$ which means that the threshold from \cite[Corollary~1.6]{FrGoKlKrPa2024} approaches $n^{-1/2}$. In comparison, the threshold from \Cref{cor:scale_free_survival} is never as good as $n^{-1/2}$, as it is lower-bounded by $n^{-1/(2+1/\varrho)}$.

The threshold from \cite[Corollary~1.6]{FrGoKlKrPa2024} does not depend on $\varrho$ at all while the bound from \Cref{cor:scale_free_survival} becomes better the higher $\varrho$ is. For~$\varrho$ approaching~$0$, that is, vertices remain likely recovered for a long time, the meta threshold approaches~$3$ as the survival time of single stars becomes arbitrarily small, effecting the bound of \Cref{cor:scale_free_survival}. For~$\varrho$ approaching infinity, the meta threshold approaches~$2$, which means that \Cref{cor:scale_free_survival} yields a far more permissive regime than \cite[Corollary~1.6]{FrGoKlKrPa2024}.

Last, we note that, for~$\varrho$ approaching infinity, the SIRS model approaches the SIS model, in which infected vertices become directly susceptible again, skipping immunity.
In the actual SIS model, for all constants $\varepsilon \in \R_{> 0}$, a single star results already in an epidemic threshold of $\lambda \geq n^{-1/2 + \varepsilon}$~\cite{ganesh2005effect}.
This is also the value that the threshold in \Cref{cor:survival} approaches.

%% file: content/outlook.tex
While our results cover a broad range of relevant network models, we leave an extended analysis for the SIRS process of preferential-attachment graphs for future work. On such networks, it would be particularly interesting to compare the behavior of the SIRS process with that of SIS, given that stars are the key substructure that the analysis of \textcite{berger2005spread} is based on, together with the stark disparity in the behavior of the two processes on stars.
We note that our results in \Cref{sec:real_world_graphs} consider graphs with a central clique, which preferential-attachment graphs do not feature.
Hence, additional insights are required for a mathematical analysis.

Another direction is to investigate the bound for how fast a single star among the connected ones gets infected.
With our current bound, our epidemic threshold in \Cref{thm:survival} scales polynomially in the number~$k$ of stars, i.e., it becomes less permissive the higher~$k$ is.
However, as shown in \Cref{cor:survival} and its proof, this effect does not change the epidemic threshold asymptotically as long as~$k$ is sub-polynomial.
A more fine-grained analysis may be capable to remove this dependency on~$k$ altogether.

We furthermore believe that the epidemic threshold can be improved by incorporating the fact that the connected stars in scale-free graph models typically form a clique.
A detailed analysis using this assumption may lead to an epidemic threshold that actually becomes more permissive the more connected stars the graph contains.

Another interesting point for future research is to consider connected stars that may share some of their leaves and are not fully disjoint from one another.
In many scale-free graph models, there is even a substantial overlap of the neighborhood of high-degree vertices.
We believe that incorporating such an overlap into the analysis should result in even more permissive bounds on the epidemic threshold.

Finally, as most previous works, we assume that the deimmunization rate~$\varrho$ of the SIRS process is constant. It is an interesting open problem what happens when~$\varrho$ depends on~$n$.

%% file: content/acknowledgments.tex
This work received support by the program \emph{Research Program for International Talents~2025} from Ecole Polytechnique.

Last, the authors thank Janosch Ruff for valuable discussions about scale-free graph models.

%% file: content/appendix.tex
\section{Probabilistic tools}\label{sec:mathTools}

We bound the survival time of an SIRS process by showing that certain of its features stochastically dominate other random processes that are easier to analyze. We say that a random process $(X_t)_{t \in \R_{\geq 0}}$ \emph{dominates} another random process $(Y_t)_{t \in \R_{\geq 0}}$ if and only if there exists a coupling $(X'_t, Y'_t)_{t \in \R_{\geq 0}}$ between~$X$ and~$Y$ such that for all $t \in \R_{\geq 0}$, we have $X'_t \geq Y'_t$.

We use the following well-known bounds on the gambler's ruin process in our proofs in order to bound the time until specific events occur.

\begin{theorem}[Gambler's ruin~{\protect\cite[page~345]{feller1957introduction}}]\label{pre:gamblersRuin}
    In the gambler's ruin game, a player starts with some amount of money $P_0 \in \N_{\geq 1}$ and then repeatedly throws a coin that shows head with probability $p \in (0,1)$, independent of all other outcomes. If it shows heads, the player's money increases by~$1$, otherwise, it decreases by~$1$. The game ends at time~$T$ when the player either reaches the lower bound $\ell \in \N_{< P_0}$ or the upper bound $u \in \N_{> P_0}$ of money.
    Let $(P_t)_{t \in \N}$ be the amount of money that a player has in a gambler's ruin game that has a probability of $p \in (0,1)\setminus\{1/2\}$ for them to win in each step. Let $q=1-p$.  Then
    \begin{enumerate}
        \item $\Pr{P_T = \ell} = \frac{1-(p/q)^{u-P_0}}{1-(p/q)^{u-\ell}}$ and
        \item $\Pr{P_T = u} = \frac{1-(q/p)^{P_0-l}}{1-(q/p)^{u-\ell}}$.\qedhere
    \end{enumerate}
\end{theorem}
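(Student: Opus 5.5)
We would prove both formulas with the exponential martingale of the walk and optional stopping. Write $r \coloneqq q/p$, which is different from $1$ because $p \neq 1/2$. Define $M_t \coloneqq r^{P_t}$ for the process stopped at~$T$. For $t < T$ we have
\[
\E{M_{t+1}}[\mathcal{F}_t] = r^{P_t}\left(p\, r + q\, r^{-1}\right) = r^{P_t}(q + p) = M_t ,
\]
so $(M_{t \wedge T})_{t \in \N}$ is a martingale. It takes values in $[\min(r^{\ell}, r^{u}), \max(r^{\ell}, r^{u})]$, so it is bounded.

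Next we show $T < \infty$ almost surely, and in fact $\E{T} < \infty$. Split time into consecutive blocks of $u - \ell$ throws. In any block, the probability that every throw shows heads is $p^{u-\ell}$. If the game is still running at the start of such a block, it must hit $u$ within the block. Hence $\Pr{T > m(u-\ell)} \le (1 - p^{u-\ell})^m$, so $T$ has a geometric tail. Because the stopped martingale is bounded, the optional stopping theorem applies and gives $\E{M_T} = M_0 = r^{P_0}$.

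Since $P_T \in \{\ell, u\}$, let $a \coloneqq \Pr{P_T = \ell}$; then $\Pr{P_T = u} = 1 - a$. Optional stopping gives
\[
a\, r^{\ell} + (1-a)\, r^{u} = r^{P_0},
\]
so $a = (r^{u} - r^{P_0})/(r^{u} - r^{\ell})$. The denominator is nonzero since $r \neq 1$ and $u > \ell$. Dividing numerator and denominator by $r^{u}$ and using $r^{-1} = p/q$ gives
\[
a = \frac{1-(p/q)^{u-P_0}}{1-(p/q)^{u-\ell}},
\]
which is item~1. For item~2, dividing instead by $r^{\ell}$ gives
\[
1 - a = \frac{r^{P_0} - r^{\ell}}{r^{u} - r^{\ell}} = \frac{1-(q/p)^{P_0-\ell}}{1-(q/p)^{u-\ell}} .
\]

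The only step that needs care is justifying optional stopping, which amounts to showing $T < \infty$ almost surely; the block argument above handles this. An alternative route avoids martingales: the hitting probability $f(x)$ from state~$x$ satisfies $f(x) = p f(x+1) + q f(x-1)$ with $f(\ell) = 1$ and $f(u) = 0$. The characteristic roots of this recurrence are $1$ and $r$, and fitting the two boundary values yields the same closed form.
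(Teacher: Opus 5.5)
Your proof is correct, but it takes a different route from the one the paper relies on. The paper gives no proof of its own and simply cites Feller. Feller's argument is the difference-equation approach you mention only at the end: first-step analysis gives $f(x) = p f(x+1) + q f(x-1)$ for $\ell < x < u$ with $f(\ell) = 1$ and $f(u) = 0$, every solution has the form $A + B(q/p)^x$, and the two boundary values fix $A$ and $B$ uniquely because $q/p \neq 1$.

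The two routes distribute the work differently. Your martingale argument needs almost-sure termination as an input, both to justify optional stopping and to conclude that $P_T = u$ with probability $1-a$. Your block bound $(1-p^{u-\ell})^m$ on the tail of $T$ supplies this cleanly. The recurrence approach needs no separate termination argument. The probability of reaching $\ell$ before $u$ and that of reaching $u$ before $\ell$ are well defined in any case, and each solves its own boundary-value problem. By uniqueness their sum is identically $1$, which shows as a by-product that the game ends almost surely; this is how Feller rules out an unending game.

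In exchange, your route is shorter once termination is known, and it extends directly to inequalities. Suppose the probability of an up-step is only bounded below by some $p > 1/2$ at every step. Then $(q/p)^{P_{t \wedge T}}$ is a bounded supermartingale, and optional stopping bounds the probability of hitting $\ell$ first by the expression in item~1. This is exactly the form in which the paper uses the statement in the proof of \Cref{thm:survival}, where only a lower bound on the up-probability of the discretized meta process is available. Your approach would replace the appeal to domination of a homogeneous gambler's ruin instance there with a direct argument.
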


The following lemma shows how likely it is that an SIRS infection of a star infects sufficiently many leaves before the center recovers.

\begin{lemma}[{\cite[Lemma~4.5]{gobel2025gradually}}]
    \label{pre:center_infected}
    Let $G^\star$ be a star with $\ell \in \N_{\geq 1}$ leaves.
    Let~$C$ be an SIRS process on a super-graph of $G^\star$ with infection rate $\lambda \in (0, 1]$ such that $\lambda \in \smallOmega{\ell^{-1}}$ and with constant deimmunization rate $\varrho \in \R_{> 0}$.
    Moreover, let $t\in \R_{>0}$ be a time at which the center of~$G^\star$ is infected.
    Last, let $\varepsilon \in \R_{>0}$ be a constant, $c = \frac{\deimmunizationRate}{4(2 + \deimmunizationRate)}$, and $d \in \R_{> 0}$ be a constant such that $d \leq c/7$ and $\eulerE[-2d/c] \geq 1 - \varepsilon/2$.
    Assume that there are always at least~$2c\ell$ vertices that are not recovered during the considered time interval.
    Then starting from~$t$, the probability that~$C$ reaches a state with at least~$\infectionRate d \ell$ infected vertices before the center heals is at least $1 - \varepsilon$ for sufficiently large~$\ell$.
\end{lemma}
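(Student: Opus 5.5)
We analyze the process only on the time window $[t, t+s]$ with $s \coloneqq 2d/c$. The idea is to couple the number of infected leaves with a simple birth--death count that ignores recoveries of the center.

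\emph{Step 1 (center survival).} The center heals at the first trigger of its clock $N_v$ after~$t$. By memorylessness, the probability that the center does not heal during $[t, t+s]$ is $\eulerE[-s] = \eulerE[-2d/c] \geq 1-\varepsilon/2$ by the choice of~$d$. On this event $E_1$, the center stays infected throughout the window. Hence every susceptible leaf becomes infected at rate~$\lambda$ via the edge clock to the center.

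\emph{Step 2 (birth rate).} Let $I_\tau$ be the number of infected vertices of~$G^\star$ at time~$\tau$. Suppose the target $\lambda d\ell$ has not yet been reached. By assumption, at least $2c\ell$ vertices are not recovered. The susceptible leaves are then at least $2c\ell - 1 - \lambda d\ell \geq 2c\ell - 1 - c\ell/7 \geq c\ell$ for large~$\ell$, using $\lambda \leq 1$ and $d \leq c/7$. So, while the center is infected and the target is not reached, new leaf infections occur at rate at least $\lambda c\ell$. The number of infections in the window therefore dominates a Poisson variable $B$ with mean $\lambda c\ell s = 2\lambda d\ell$. This uses thinning of the edge clocks; the construction is to fix at each moment $\lceil c\ell\rceil$ susceptible leaves and count only their edge triggers.

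\emph{Step 3 (death count).} Before the target is reached, at most $\lambda d\ell$ vertices are infected. Each recovers at rate~$1$, so the number of recoveries in the window is dominated by a Poisson variable $D$ with mean $\lambda d\ell s = 2\lambda d^2\ell/c \leq (2/7)\lambda d\ell$.

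\emph{Step 4 (concentration and conclusion).} If the target is never reached during the window and $E_1$ holds, then
\[
\lambda d\ell > I_{t+s} \geq 1 + B - D .
\]
This would require $B - D < \lambda d\ell$. However, $\E{B} - \E{D} \geq (2 - 2/7)\lambda d\ell$. Since $\lambda \in \smallOmega{\ell^{-1}}$, we have $\lambda d\ell \to \infty$. Poisson Chernoff bounds then give $\Pr{B \leq 1.5\lambda d\ell}$ and $\Pr{D \geq 0.5\lambda d\ell}$ both $\smallO{1}$, hence at most $\varepsilon/4$ each for sufficiently large~$\ell$. A union bound with the failure of $E_1$ yields a success probability of at least $1-\varepsilon$.

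\emph{Main obstacle.} The main obstacle is making the domination couplings in Steps~2--3 rigorous when the susceptible set changes over time. This includes leaves losing immunity, and it includes the hypothesis on non-recovered vertices, which is only a global assumption. I would handle this by constructing the coupling explicitly on the clock realizations, in the paper's Poisson-clock formulation, and stopping it at the first time the target is reached or the center heals.
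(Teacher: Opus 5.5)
Your argument is correct. Note that the paper never proves this lemma itself; it imports it verbatim from \cite[Lemma~4.5]{gobel2025gradually}, so there is no in-paper proof to compare against.

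The hypothesis $\mathrm{e}^{-2d/c}\ge 1-\varepsilon/2$ is tailored exactly to your window of length $s=2d/c$, in which the center does not heal. The rest of your proof is the natural argument the constants are built for:
\begin{itemize}
\item at least $2c\ell-1-\lambda d\ell\ge c\ell$ susceptible leaves give an infection rate of at least $\lambda c\ell$;
\item fewer than $\lambda d\ell$ infected vertices give a recovery rate of at most $\lambda d\ell$;
\item $d\le c/7$ makes the expected net gain $(2-2d/c)\lambda d\ell\ge \tfrac{12}{7}\lambda d\ell$ exceed the target by a constant factor;
\item $\lambda\ell\to\infty$ gives the Poisson concentration.
\end{itemize}

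One point deserves a precise formulation, since you flagged it yourself. Let $F$ be the event that the non-recovered hypothesis holds. Your coupling uses $F$ only to guarantee that no padding clocks are needed before the target is hit. So what you actually prove is $\mathrm{Pr}[\text{failure}\cap F]\le \varepsilon/2+o(1)$, with $B$ and $D$ genuinely Poisson under the unconditioned measure.

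Do not condition on $F$ inside the argument. $F$ depends on the future, and conditioning on it would spoil the Poisson laws of $B$ and $D$. If a conditional statement is wanted, obtain it afterwards by dividing by $\mathrm{Pr}[F]$.

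The unconditional form is exactly what the paper needs. In the proof of \Cref{lem:infect_probability}, $F$ is discharged via the asymptotically-almost-sure bound of \cite[Lemma~4.4]{gobel2025gradually}.
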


\section{Lower bound of the SIRS Survival Time on Connected Stars}\label{ap:theory}

In this section we rigorously proof the lemmas from \Cref{sec:theory}. We start by proving \Cref{cor:survival} by plugging in $k=\lfloor \ln(n)^2\rfloor$ and $\ell = \lceil n^c\rceil$ into \Cref{thm:survival}.

\CorSurvival*

\begin{proof}
    For a cleaner proof, we ignore constants introduced from big-O notation, in particular, all following inequalities are only true up to constant factors. For some constant $\varepsilon'\in \R_{>0}$ we choose later, we aim to apply \Cref{thm:survival} with $k= \lfloor\ln(n)^2\rfloor$, $\ell= \lceil n^c\rceil$, and where we rename the~$\varepsilon$ from \Cref{thm:survival} to~$\varepsilon'$. From now,~$\varepsilon$ refers exclusively to the variable of the same name from \Cref{cor:survival}.

    First note that $\frac{1}{2+1/\varrho} < \frac{1}{2}$, hence $\lambda \in \bigOmega{n^{-\frac{c}{2+1/\deimmunizationRate}+\varepsilon}}[\big]$ implies $\lambda \in \bigOmega{l^{-\frac{1}{2}+\varepsilon'}}[\big]$ when $\varepsilon'<\frac{1}{2}-\frac{1}{2+1/\varrho}$.

    We now show that choosing $\varepsilon'$ small enough implies $\lambda \in \smallOmega{kp}$. To this end, we lower-bound $\frac{\lambda}{kp}$ by a term in $\smallOmega{1}$.
    \begin{align*}
        \frac{\lambda}{kp} & \geq \frac{\lambda}{\ln(n)^2(\lambda^2\ell)^{-(1-\varepsilon')\varrho}}                                 \\
                           & = \frac{1}{\ln(n)^2}\lambda^{1+2(1-\varepsilon')\varrho}\ell^{(1-\varepsilon')\varrho}                  \\
                           & \geq \frac{1}{\ln(n)^2}\lambda^{1+2\varrho}\ell^{(1-\varepsilon')\varrho}                               \\
                           & \geq \frac{1}{\ln(n)^2}n^{(-\frac{c}{2+1/\varrho}+\varepsilon)(1+2\varrho)}n^{c(1-\varepsilon')\varrho} \\
                           & \geq \frac{1}{\ln(n)^2} n^{\varepsilon(1+2\varrho)-\varepsilon'c\varrho}.
    \end{align*}

    Note that choosing $\varepsilon'$ small enough in order to make the exponent positive gives us a constant $\varepsilon^*\in \R_{>0}$ such that $\frac{\lambda}{kp}\geq n^{\varepsilon^*}$. Hence, $\lambda \in \smallOmega{kp}$. Applying \Cref{thm:survival} gives us for sufficiently large $n$.
    \begin{align*}
        \E{T} & \geq (kp)^{-1}\left(\lambda/(k p)\right)^{k-1}     \\
              & \geq\frac{n^{\varepsilon^*(\ln(n)^2-1)}}{\ln(n)^2} \\
              & \geq n^{\ln(n)}.\qedhere
    \end{align*}
\end{proof}

Next we show \Cref{lem:active_geom}, which is a strengthened version of \cite[Lemma~4.9]{gobel2025gradually}. To this end, we modify the lemmas they used slightly in order for them to use on a slightly different definition of phases. This makes it possible to get a bound on the distribution of the time until an infection in a star dies out instead of just getting a bound on the expected time.

\activeGeom*

\begin{proof}
    Let $d \in \R_{> 0}$ be the constant defined in \cite[Lemma~4.8]{gobel2025gradually}. Our proof considers separate phases of~$C$.
    We define a phase to start whenever no previous phase is still ongoing and when the number of infected leaves (of~$G^\star$) is below $\lambda d \ell$. A phase ends at the first time at which the center recovers after the number of infected leaves exceeded $\lambda d \ell$ after the start of the phase.

    In the following, we aim at applying \cite[Lemma~4.8]{gobel2025gradually}, which implies that the number of phases dominates a geometric random variable with appropriate parameter.
    However, since our definition of a phase differs slightly from that of \textcite{gobel2025gradually}, we argue below why the lemma is still applicable.
    Afterward, we show that each phase has a constant probability to last for a time of at least~$1$. We then combine these two results, yielding a bound on the total length of all phases.

    The proof of \cite[Lemma~4.8]{gobel2025gradually} merges each center-susceptible phase and the following center-infected phase into a single step. It then bounds the number of leaves that recover in these phases by \cite[Corollary~4.7]{gobel2025gradually} and analyzes the probability of the remaining infected leaves recovering in center-recovered phases before a center-infected phase leads to the number of infected leaves exceeding $\lambda d \ell$.
    The only difference of our phase definition to theirs is that the start of a phase in our sense may be delayed by a center-infected phase, as our phase only ends after the center recovers. However, \cite[Corollary~4.7]{gobel2025gradually} already bounds the number of leaves that recover in center-infected phases, and adding one additional center-infected phase does not change the asymptotic statement of \cite[Corollary~4.7]{gobel2025gradually}. Hence, the exact same argumentation is sufficient in order to show that \cite[Lemma~4.8]{gobel2025gradually} is also applicable to our phases.

    Let $c = \frac{\varrho}{4(2 + \varrho)}$.
    We continue by bounding the probability of a phase to be the last.
    \cite[Lemma~4.8]{gobel2025gradually} states that this probability is at most $q \in \bigTheta{(\infectionRate^2\numberOfVertices)^{-(1-\varepsilon)\deimmunizationRate}}$.
    However, this requires to condition on the event~$E$ that there are at least $2c\ell$ vertices that are not recovered throughout the entire phase. By \cite[Lemma~4.4]{gobel2025gradually}, the probability that~$\overline{E}$ occurs during a phase is exponentially small (in~$\ell$). Hence, the probability that a phase is the last or that~$\overline{E}$ occurs during this phase is at most~$2q$.

    Last, we bound the length of all phases combined.
    Let $Y$ be the number of phases before~$G^\star$ becomes inactive.
    Therefore,~$Y$ dominates a geometric random variable with a parameter $2q$. As each phase includes the center recovering, its length dominates an exponentially distributed random variable with rate~$1$. Hence, each phase has a probability of at least $\eulerE^{-1}$ to last for a time of at least~$1$, independent of the length of all the other phases. Hence the total time until none of the vertices in $G^\star$ are infected dominates the sum of $Y$ random lengths that each have a probability of $\eulerE^{-1}$ to be at least~$1$, which dominates $\mathrm{Geom}(2q \cdot \eulerE)-1$.
    Since the success probability of this distribution is only different from~$q$ by constant factors, the result follows.
\end{proof}

Next we show \Cref{lem:infect_probability}, which lower bounds the probability of an active star $S_1$ activating an adjacent inactive star $S_2$ within a time interval of constant length. For the proof we split up the event into multiple events and lower bound each of their probabilities.

\infectProbability*

\begin{proof}
    We split~$E$ into five events that when they all occur imply $E$. In detail, we argue why it is likely that the center of $S_2$ becomes susceptible, then the center of $S_1$ becomes susceptible and then infected, infects the center of $S_2$, and then this infection activates~$S_2$. In order to assure that all of these events occur, we define each event to include the previous event, and then we bound its probability conditional on the previous one.

    Note that some of the events may occur immediately or in a different way as discussed above. For example, the center of $S_2$ may already start susceptible or it may be infected by some other vertices than the center of $S_1$. We ignore these additional possibilities as they just increase the probabilities of the events we aim to lower-bound.

    The first event, $E_1$, is the event that there exists a time $t_1 \in [0,1/\varrho]$ such that the center of $S_2$ is not immune at $t_1$. As the center of $S_2$ loses its immunity with a clock of rate~$\varrho$, we get $\Pr{E_1} \geq 1 - \eulerE^{-1}$.

    The next events assure that the center of $S_1$ becomes infected after $E_1$ occurred. Event~$E_2$ describes that $E_1$ occurs and that there exists a time $t_2 \in [t_1, t_1+1/\varrho]$ such that the center of $S_1$ is not immune. As it loses its immunity with a clock of rate $\varrho$, we get $\Pr{E_2}[E_1] \geq 1 - \eulerE^{-1}$.

    Let $E_3$ be the event that $E_2$ occurs and that there exists a time $t_3 \in [t_2, t_2+1/(1+\lambda)]$ such that the center of $S_1$ is infected at $t_3$. As $\lambda \in \bigOmega{\ell^{-1/2+\varepsilon}}$, by \Cref{lem:active_geom} the star stays active for a time polynomial in $\ell$ with probability at least $3/4$. Recovering all leaves takes w.h.p. at most a logarithmic time in $\ell$, therefore with a probability of at least $1/2$, the center of $S_1$ becomes infected again before $S_1$ becomes inactive. Conditioned on $S_1$ becoming infected again, the time it takes until $S_1$ becomes infected is dominated by an exponential distribution with rate $1+\lambda$. The reason for this is that for each leaf, the first trigger of it recovering and it infecting the center occurs at rate $1+\lambda$ independent of the other leaves and which of its two clocks triggers first. With the condition that the center of $S_1$ becomes infected again, there exists a leaf for which its infection clock triggers first. The time until this occurs follows an exponential distribution with rate $1+\lambda$. Other leaves may infect the center of~$S_1$ before that, but the time until the center is infected is dominated by this exponential distribution with rate $1+\lambda$. Hence $\Pr{E_3}[E_2] \geq (1 - \eulerE[-1])/2$.

    Event~$E_4$ denotes that $E_3$ occurs and there is a time $t_4 \in [t_3,t_3 + 1/(1+\lambda)]$ such that the center of $S_2$ is infected at $t_4$. As the center of $S_1$ starts infected and the center of~$S_2$ is susceptible, an infection occurs if the infection clock triggers before the recovering clock of the center of $S_1$. As both of those clocks are exponentially distributed, the first of them triggers after a time exponentially distributed with rate $1+\lambda$, and the probability of it being the infection trigger is $\lambda/(1+\lambda)$. Hence $\Pr{E_4}[E_3] \geq (1 - \eulerE^{-1})\lambda/(1+\lambda)$.

    Last, $E_5$ is the event that $E_4$ occurs and there exists a time $t_5 \in [t_4,t_4+1]$ such that $S_2$ is active at $t_5$. As the center of $S_2$ recovers at rate~$1$, there is a probability of $1 - \eulerE^{-1}$ that it recovers within the given time frame. By \Cref{pre:center_infected}, there is a probability of at least $1-\varepsilon$ that the star becomes active before the center recovers. Hence, $\Pr{E_5}[E_4] \geq (1 - \eulerE^{-1})(1-\varepsilon)$. Note that \Cref{pre:center_infected} assumes that, for a constant $c = \frac{\varrho}{4(2 + \varrho)}$, there are always $2c\ell$ leaves that are not recovered. By \cite[Lemma~4.4]{gobel2025gradually} that is true asymptotically almost surely, hence this additional condition only changes the probability by a factor $1-\smallO{1}$, and we obtain $\Pr{E_5}[E_4] \geq (1 - \eulerE^{-1})(1-\varepsilon)$ without the condition that there are always $2c\ell$ leaves that are not recovered by choosing a slightly bigger~$\varepsilon$.

    We conclude by bounding the probability of $E$ based on the probabilities of the events $E_1$ to $E_5$. Note that these events all describe something occurring in a time frame that starts after the previous event occurs, and note that appending all of these time frames yields a time frame of length at most $2(1 + 1/\varrho + 1/(1+\lambda))$. Therefore, event $E_5$ implies $E$, which means we have
    \begin{align*}
        \Pr{E} & \geq \Pr{E_5}                                                                                                                                                          \\
               & = \Pr{E_1} \cdot \Pr{E_2}[E_1] \cdot \Pr{E_3}[E_2] \cdot \Pr{E_4}[E_3] \cdot \Pr{E_5}[E_4]                                                                             \\
               & \geq (1 - \eulerE^{-1}) \cdot (1 - \eulerE^{-1}) \cdot 2^{-1}(1 - \eulerE[-1]) \cdot (1 - \eulerE^{-1})\lambda(1+\lambda)^{-1} \cdot (1 - \eulerE^{-1})(1-\varepsilon) \\
               & \in \bigOmega{\lambda}.\qedhere
    \end{align*}
\end{proof}

Using \Cref{lem:active_geom,lem:infect_probability}, we show \Cref{thm:survival}.

\Survival*

\begin{proof}
    We consider the discrete version of the simplified meta process $X_{\tau(i)}$ (see \Cref{def:meta-process}) which counts the number of active stars over time and discretizes it to times where this value changes. We show that this process dominates a biased gambler's ruin problem. By bounding the time this gambler's ruin instance needs to hit its lower end (\Cref{pre:gamblersRuin}) and bounding the time each step takes, we obtain a bound on the survival time.

    The process $X_{\tau(i)}$ changes its values by~$1$ up and down in the interval $[0 .. k]$ until~$T$. Note that at time~$T$, we have $X_T = 0$, hence,~$T$ is not reached before $X_{\tau(i)}$ drops to~$0$. By \Cref{pre:center_infected}, there is a probability of at least $1-\varepsilon$ that starting with $X_0=0$, the random variable $X_{\tau(1)}$ reaches~$1$ and~$T$ is not reached before.

    Now consider a state at step $i\in \N$ with $X_{\tau(i)} \in [1 .. k-1]$. We aim to bound the probability of $X_{\tau(i+1)}$ increasing and decreasing by~$1$. To this end, we consider a time interval $I \subset \R_{\geq 0}$ of length $\delta = \lceil 2(1 + 1/\varrho + 1/(1+\lambda))\rceil$, and we bound the probability of $\tau(i+1)\in I$ and $X_{\tau(i+1)}$ increasing or decreasing in that interval. If $\tau(i+1)$ is not in $I$, we use the same argument on the next interval of the same length iteratively.

    As we bound in \Cref{lem:active_geom} the time that a star stays active by a geometric random variable that is independent of the time of other stars, we bound the probability of~$X_{\tau(i+1)}$ increasing or decreasing by~$1$, independent of what happened before. By \Cref{lem:active_geom}, there exists a $p' \in \bigTheta{(\infectionRate^2\ell)^{-(1-\varepsilon)\deimmunizationRate}}$ such that the time until an active star becomes inactive dominates a random variable $A \sim \mathrm{Geom}(p')-1$. At time~$\tau(i)$, there are fewer than~$k$ active stars. Hence, we upper-bound the probability that one of these stars becomes inactive during the interval~$I$ by $k(\delta+1) \cdot p'$.

    In order to lower-bound the probability that a new star becomes active, we first assume that no star becomes inactive during the time interval~$I$, which happens with probability at least $1-k(\delta+1) \cdot p'$. Note that because the star centers induce a connected subgraph, $X_{\tau(i)} \in [1 .. k-1]$ implies that there is a center of an active star connected to a center of an inactive star. By \Cref{lem:infect_probability}, there is a constant $c \in \R_{>0}$ such that this inactive star becomes active with probability at least $c \lambda$. Hence,~$X_{\tau(i)}$ increases by~$1$ during the time interval~$I$ with probability at least $(1-k(\delta+1) \cdot p') c \lambda$. Using the same argument iteratively on the next time interval of the same length, we get that the probability of $X_{\tau(i)}$ increasing by~$1$ in the next step divided by the probability of it decreasing is at least $\frac{(1-k(\delta+1) \cdot p') c \lambda}{k(\delta+1) \cdot p'}$. Hence, the process $(X_{\tau(i)})_{i \in \N}$ dominates a biased gambler's ruin instance $Y$ with the same ratio between going up and down by~$1$. By noting that $\delta$ is upper- and lower-bounded by constants and that $\lambda \leq 1$, we get that $\lambda \in \smallOmega{kp'}$ implies that $1-k(\delta+1) \cdot p'$ converges to~$1$,
    Hence, we get that there is a $p \in \bigTheta{(\infectionRate^2\ell)^{-(1-\varepsilon)\deimmunizationRate}}$ such that~$Y$ has a ratio between going up and down by~$1$ of $\frac{\lambda}{kp}$ when $\lambda \in \smallOmega{kp}$.

    Using the second inequality of \Cref{pre:gamblersRuin}, we get that the probability of~$Y$ reaching~$k$ starting from~$1$ is $\frac{1-kp/\lambda}{1-(kp/\lambda)^k}$. For $\lambda \in \smallOmega{kp}$, this term converges to~$1$. Hence, for sufficiently large~$k$, there is a constant probability of at least~$1/2$ of~$Y$ reaching~$k$.

    Using the first inequality of \Cref{pre:gamblersRuin}, the probability of dropping down from $k-1$ to $0$ before reaching $k$ is $\frac{1-\lambda/kp}{1-(\lambda/kp)^k}$, which is in $\bigTheta{\left(kp/\lambda\right)^{k-1}}[\big]$, as $\lambda \in \smallOmega{kp}$ implies that the $1$s are lower-order terms. As~$X_{\tau(i)}$ dominates~$Y$, its probability of dropping down from $k-1$ to~$0$ before reaching~$k$ is even lower. Therefore, the expected number of steps until $X_{\tau(i)}$ reaches~$0$ is in $\bigOmega{\left(\lambda/kp\right)^{k-1}}[\big]$. As stars become inactive at rate at most $p$, dropping down from $k$ to $k-1$ takes in expectation at least $1/(kp)$ time. Therefore, the process $(X_t)_{t \in \R_{\geq 0}}$ takes in expectation at least $\bigOmega{(kp)^{-1}\left(\lambda/kp\right)^{k-1}}[\big]$ time.
\end{proof}

\section{Super-Polynomial SIRS Survival Time in Scale-Free Graph Models}\label{ap:real_world}

\compare*

\begin{proof}
    The expander bound yields a super-polynomial expected survival time once $\infectionRate \geq \infectionConstant \numberOfVertices^{(\hyperbolicExponent-3)/2}$. In contrast, \Cref{cor:scale_free_survival} yields such an expected time once $\lambda \in \bigOmega{n^{-\frac{1}{(\gamma-1)(2+1/\varrho)}+\varepsilon}}$. As the $\varepsilon$ is an arbitrarily small constant, the connected-star bound is smaller if and only if the exponent without the $\varepsilon$ is strictly smaller than the exponent from the expander bound. We observe that
    \begin{align*}
                        &  & -\frac{1}{(\gamma-1)(2+1/\varrho)}         & < (\gamma-3)/2                        \\
        \Leftrightarrow &  & (\gamma -3)(\gamma-1)                      & > -\frac{2}{2+1/\varrho}              \\
        \Leftrightarrow &  & \gamma^2 -4\gamma +\frac{2}{2+1/\varrho}+3 & >0                                    \\
        \Leftrightarrow &  & \gamma                                     & > 2 + \sqrt{1- \frac{2}{2+1/\varrho}} \\
        \Leftrightarrow &  & \gamma                                     & > 2 + \sqrt{\frac{1}{2\varrho+1}}.
    \end{align*}

    Note that the third-to-last inequality is quadratic and therefore yields two regimes for~$\gamma$ in which it is fulfilled. However, the second regime is strictly below~$2$, and we only consider $\gamma \in (2, 3)$. Hence we get that our threshold based on connected stars is smaller than the threshold based on expansion if and only if $\gamma > 2 + (2\varrho+1)^{-1/2}$.
\end{proof}